\crefname{appendix}{}{}
\newcommand{\C}{\mathbb{C}}
\theoremstyle{Theorem}
\newtheorem{thm}{Theorem}
\newtheorem{prop}{Proposition}
\theoremstyle{definition}
\journal{Linear Algebra and its Applications}
\begin{document}

\begin{frontmatter}

%% Title, authors and addresses

%% use the tnoteref command within \title for footnotes;
%% use the tnotetext command for theassociated footnote;
%% use the fnref command within \author or \affiliation for footnotes;
%% use the fntext command for theassociated footnote;
%% use the corref command within \author for corresponding author footnotes;
%% use the cortext command for theassociated footnote;
%% use the ead command for the email address,
%% and the form \ead[url] for the home page:
%% \title{Title\tnoteref{label1}}
%% \tnotetext[label1]{}
%% \author{Name\corref{cor1}\fnref{label2}}
%% \ead{email address}
%% \ead[url]{home page}
%% \fntext[label2]{}
%% \cortext[cor1]{}
%% \affiliation{organization={},
%%             addressline={},
%%             city={},
%%             postcode={},
%%             state={},
%%             country={}}
%% \fntext[label3]{}

\title{A Hidden Variable Resultant Method for the Polynomial Multiparameter Eigenvalue Problem}

%% use optional labels to link authors explicitly to addresses:
%% \author[label1,label2]{}
%% \affiliation[label1]{organization={},
%%             addressline={},
%%             city={},
%%             postcode={},
%%             state={},
%%             country={}}
%%
%% \affiliation[label2]{organization={},
%%             addressline={},
%%             city={},
%%             postcode={},
%%             state={},
%%             country={}}

\author[label1]{Emil Graf} %% Author name
\author[label1]{Alex Townsend} 

%% Author affiliation
\affiliation[label1]{organization={Cornell University},%Department and Organization
            addressline={212 Garden Ave}, 
            city={Ithaca},
            postcode={14853}, 
            state={NY},
            country={USA}}

%% Abstract
\begin{abstract}
%% Text of abstract
%Despite the appearance of polynomial multiparameter eigenvalue problems in wide ranging applications, there does not exist a general purpose method to globally solve such problems. We give a new global method to solve polynomial multiparameter eigenvalue problems that can find all the solutions to a general polynomial multiparameter eigenvalue problem without resorting to custom methods.
We present a novel, global algorithm for solving polynomial multiparameter eigenvalue problems (PMEPs) by leveraging a hidden variable tensor Dixon resultant framework.  Our method transforms a PMEP into one or more univariate polynomial eigenvalue problems, which are solved as generalized eigenvalue problems.  Our general approach avoids the need for custom linearizations of PMEPs.  We provide rigorous theoretical guarantees for generic PMEPs and give practical strategies for nongeneric systems.  Benchmarking on applications from aeroelastic flutter and leaky wave propagation confirms that our algorithm attains high accuracy and robustness while being broadly applicable to many PMEPs. 
\end{abstract}

%%Graphical abstract
%\begin{graphicalabstract}
%%\includegraphics{grabs}
%\end{graphicalabstract}

%%Research highlights
%\begin{highlights}
%\item Research highlight 1
%\item Research highlight 2
%\end{highlights}

%% Keywords
\begin{keyword}
%% keywords here, in the form: keyword \sep keyword

polynomial multiparameter eigenvalue problem \sep hidden variable resultant

%% PACS codes here, in the form: \PACS code \sep code

%% MSC codes here, in the form: \MSC code \sep code
%% or \MSC[2008] code \sep code (2000 is the default)

\MSC[2020] 65F15 \sep 15A18 \sep 15A22 \sep 15A69

\end{keyword}

\end{frontmatter}

%% Add \usepackage{lineno} before \begin{document} and uncomment 
%% following line to enable line numbers
%\linenumbers

%% main text
%%

\section{Introduction}
\label{sec:intro}

Polynomial multiparameter eigenvalue problems (PMEPs) appear in a variety of applications, including the study of aeroelastic flutter \cite{pons2017aef,pons2018aef}, analysis of delay-differential equations \cite{jarlebring2009dde}, computation of the signed distance between ellipsoids \cite{iwata2015signeddist}, computation of zero-group-velocity points in waveguides \cite{kiefer2023waves}, and computation of properties of leaky waves \cite{gravenkamp2025leakywaves}. A PMEP takes the form
\begin{equation} \label{eq:MPEigForm}
P_i(x_1,\ldots,x_d) \mathbf{v}_i = 0, \quad 1 \leq i \leq d,
\end{equation}
where $P_i(x_1,\ldots,x_d) \in \C^{n_i \times n_i}[x_1,\ldots,x_d]$ for integers $n_1,\ldots,n_d$ is a multivariate matrix polynomial, meaning $P_i(x_1^*,\ldots,x_d^*) \in \C^{n_i \times n_i}$ for any $(x_1^*,\ldots,x_d^*) \in \C^d$. The task is to find eigenvalues $(x_1^*,\ldots,x_d^*) \in \C^d$ and eigenvectors $\mathbf{v}_i^* \in \C^{n_i}, 1 \leq i \leq d,$ that solve all equations simultaneously. PMEPs can be viewed as a generalization of multivariate polynomial rootfinding problems, with matrix instead of scalar coefficients, or multiparameter eigenvalue problems, where we replace linear matrix polynomials with nonlinear ones.

Despite the prevalence of applications, we believe there is no general-purpose global method to solve PMEPs. As the PMEP is a generalization of polynomial rootfinding and multiparameter eigenvalue problems, a global PMEP solver must contend with the challenges of both problems. As we observe in \cref{sec:background}, methods for polynomial rootfinding problems must address the nonlinearity of the starting problem, and methods for multiparameter eigenvalue problems face the obstacle of distinct eigenvectors for each equation, which makes it challenging to solve all equations simultaneously; a PMEP solver must simultaneously address both issues, making construction of such a method difficult.

In the absence of global methods, researchers have constructed custom linearizations tailored to specific degree constraints and sparsity patterns of the matrix polynomials $P_i$. For example, the quadratic two parameter eigenvalue problem, in which $d=2$ and the total degree of each $P_i$ is $\leq 2$, has received particular attention \cite{bor2010quadeig,hochstenbach2012quadeig}. While these custom methods are often very successful at solving the problems they are constructed for, the work of developing a custom method for each instance of \cref{eq:MPEigForm} discourages scientists from approaching applications that involve solving more complicated PMEPs.

With this need in mind, we derive a method to solve any PMEP globally. Our method reduces a PMEP to one or more polynomial eigenvalue problems (PEPs), which can be solved by known linearizations, such as companion or colleague \cite{mackey2006vs,nakatsukasa2016stability}, and the QZ algorithm.  Our method is competitive with existing methods based on case-by-case linearizations for several applications. More importantly, it presents an opportunity for practitioners to solve relevant PMEPs immediately without the time-consuming construction of custom methods.

\subsection{The Generic $d$-degree PMEP}
\label{subsec:gen}

For theoretical analysis later, we consider a particular structure of PMEP. A (matrix) polynomial is called generic $d$-degree if it is of the form
\begin{equation} \label{eq:maxdegsystem}
	P_i(x_1,\ldots,x_d) = \sum_{i_1 = 0}^{\tau_1} \cdots \sum_{i_d = 0}^{\tau_d} P_{i_1,\cdots,i_d} x_1^{i_1} \cdots x_d^{i_d},
\end{equation}
where $\tau_1,\ldots,\tau_d$ are positive integers, and $P_{i_1,\ldots,i_d}$ is an $n_i \times n_i$ matrix with entries that are distinct indeterminates. A PMEP is called generic $d$-degree, or more specifically generic $d$-degree $\tau_1,\ldots,\tau_d$, if every matrix polynomial is generic $d$-degree of the same multi-degree $\tau_1,\ldots,\tau_d$. 

The practical consequence of analysis for generic $d$-degree systems is that a system of the form in \cref{eq:maxdegsystem} with random coefficients $P_{i_1,\ldots,i_d} \in \C^{n_i \times n_i}$ can be expected to behave like a generic system with probability one. While generic systems with indeterminate coefficients are impractical, the theoretical analysis can be useful for practical systems that are close enough to random to share many of the same properties that we can prove for generic systems. 

Even if the PMEP is not constructed with indeterminate or random coefficients, any matrix polynomial can be written in the form in \cref{eq:maxdegsystem}, which is the most natural form for the construction in our algorithm, even if some or many of the matrix coefficients are zero. We refer to a system as a maximal $d$-degree $\tau_1,\ldots,\tau_d$ system if it is represented as in \cref{eq:maxdegsystem}, but the matrix coefficients $P_{i_1,\cdots,i_d} \in \C^{n_i \times n_i}$ are not necessarily indeterminates. We represent all our systems as maximal degree systems for some multidegree $\tau_1,\ldots,\tau_d$, though not all are generic.

\subsection{Structure of the Paper}

We begin by giving background on the pre-existing methods that inspire our approach (see \cref{sec:background}). Then we outline our method to solve generic PMEPs (see \cref{sec:outline}) and prove that the method works for generic PMEPs (see \cref{sec:theory}). We then examine what can go wrong in the nongeneric case, and explore the numerical techniques we use to overcome challenging examples (see \cref{sec:finetuning}). We finish with some practical experiments based on applications in \cite{pons2017aef,pons2018aef,gravenkamp2025leakywaves}  to highlight the utility of our method (see \cref{sec:num}).

\section{Background}
\label{sec:background}

Our algorithm combines ideas from the operator determinants method for the linear multiparameter eigenvalue problem and hidden variable resultant methods for systems of polynomial equations. As seen in table \ref{tab:evp}, the PMEP is a generalization of both problems.  
%We also briefly describe known methods to linearize and solve polynomial eigenvalue problems.

%\subsection{The Generalized Eigenvalue Problem}
%\label{subsec:evp}
%
%A generalized eigenvalue problem (GEP) is an equation $A \mathbf{v} = \lambda B \mathbf{v}$, where $A,B \in \C^{n \times n}$ are given and the task is to find pairs $\mathbf{v} \in \C^n,\lambda \in \C$ that solve the equation. More generally, a polynomial eigenvalue problem (PEP)        is an equation $P(\lambda) \mathbf{v} = 0$, where $P(\lambda) = \sum_{i=0}^m P_i \lambda^i$ is a univariate matrix polynomial with $P_i \in \C^{n \times n}$, and the task is again to find pairs $\mathbf{v} \in \C^n,\lambda \in \C$ that solve the equation.

\subsection{The Linear Multiparameter Eigenvalue Problem}
\label{subsec:opdet}

 In the special case where all of the matrix polynomials $P_i$ are linear, the PMEP in \cref{eq:MPEigForm} reduces to a linear multiparameter eigenvalue problem (MEP) of the form:
\begin{equation} \label{eq:MEP}
W_i(\mathbf{x}) \mathbf{v}_i = \left(V_{i0} - \sum_{j=1}^d x_j V_{ij} \right) \mathbf{v}_i = 0, \quad 1 \leq i \leq d,
\end{equation}
with $V_{ij} \in \mathbb{C}^{n_i \times n_i}$ for integers $n_1,\ldots,n_d$. A solution of the multiparameter eigenvalue problem consists of an eigenvalue $(x_1^*,\ldots,x_d^*) \in \C^d$ and eigenvectors $\mathbf{v}_i^* \in \C^{n_i}, 1 \leq i \leq d$, that satisfy \cref{eq:MEP}. MEPs are extremely well-studied, and a popular global method exists to solve them. The usual approach to solve an MEP is via operator determinants \cite{atkinson1972multieig}, where one constructs
\[
\Delta_0 =
\begin{vmatrix}
    V_{11}& V_{12} & \cdots & V_{1d} \\
    V_{21}& V_{22} & \cdots & V_{2d} \\
    \vdots & \vdots & \ddots & \vdots \\
    V_{d1} & V_{d2} & \cdots & V_{dd}
\end{vmatrix}_{\otimes}, \,
\Delta_i = 
\begin{vmatrix}
    V_{11}& \cdots & V_{1,i-1} & V_{10} & V_{1,i+1} & \cdots & V_{1d} \\
    V_{21}& \cdots & V_{2,i-1} & V_{20} & V_{2,i+1} & \cdots & V_{2d} \\
    \vdots & \ddots & \vdots & \vdots & \vdots & \ddots & \vdots \\
    V_{d1} & \cdots & V_{d,i-1} & V_{d0} & V_{d,i+1} & \cdots & V_{dd}
\end{vmatrix}_{\otimes},
\]
where the notation $\begin{vmatrix} M \end{vmatrix}_{\otimes}$ denotes taking the block determinant of $M$ with multiplication replaced by Kronecker products. That is, for example,  given by 
\begin{equation} \label{eq:leibniz}
\Delta_0 = \sum_{\sigma \in S_d} \textbf{sgn}(\sigma) V_{1,\sigma(1)} \otimes V_{2,\sigma(2)} \otimes \cdots \otimes V_{d,\sigma(d)},
\end{equation}
where $S_d$ is the group of permutations on $\{1,\ldots,d\}$ and $\textbf{sgn}(\sigma)$ is the sign of the permutation $\sigma$. As $\otimes$ is not commutative, the determinant should be expanded as in \cref{eq:leibniz} using the Leibniz formula and the Kronecker products multiplied from the first row to the last row.

After constructing the operator determinants, one solves the GEPs given by 
\begin{equation} \label{eq:opdetGEP}
(\Delta_i -x_i \Delta_0)\mathbf{z}_i = 0, \quad 1 \leq i \leq d.
\end{equation}
It turns out that the eigenvalues $x_i$ of the GEPs are the $i$th coordinates of the eigenvalues of the original MEP and that the eigenvectors $\mathbf{z}_i$ are all equal and are given by the Kronecker product $\mathbf{v}_1^* \otimes \cdots \otimes \mathbf{v}_d^*$ \cite{atkinson1972multieig}. All global methods for MEPs of which we are aware are based on this idea.

We will use operator determinants as a key building block in our PMEP solver. The crucial innovation of this method is that the Kronecker products allow us to combine many separate eigenvalue problems into one; in particular, the original problems have completely distinct eigenvectors, but this construction allows each GEP in \cref{eq:opdetGEP} to have a single eigenvector that is the Kronecker product of the original eigenvectors. Our PMEP solver will incorporate Kronecker structure for the same reason.

\subsection{Hidden Variable Resultants}
\label{subsec:hvr}

In computational algebraic geometry, researchers study multivariate rootfinding problems of the form
 \begin{equation} \label{eq:polysystem}
p_i(x_1,\ldots,x_d) = 0, \quad 1 \leq i \leq d,
\end{equation}
where $p_i \in \C[x_1,\ldots,x_d]$ is a multivariate polynomial. Hidden variable resultant methods are one popular method for solving such systems; they are particularly useful as they convert a polynomial system to a GEP, which allows practitioners to use well studied algorithms from numerical linear algebra such as the QZ algorithm to do much of the work in a solver. We will generalize this idea to convert a PMEP to a GEP, with the same end goal of using QZ as a significant step of our algorithm.

A hidden variable resultant method views each polynomial $p_i$ as a polynomial in the variables $x_1,\ldots,x_{d-1}$ with coefficients that are functions of $x_d$. That is, we write
\begin{equation}
	p_i(x_1,\ldots,x_d) = \sum_{i_1 = 0}^{\tau_1} \cdots \sum_{i_{d-1} = 0}^{\tau_{d-1}} \left( p_{i_1,\ldots,i_{d-1}}(x_d) \right) x_1^{i_1}  \cdots x_{d-1}^{i_{d-1}},
\end{equation}
with $p_{i_1,\ldots,i_{d-1}}(x_d) \in \C[x_d]$ and $\tau_1,\ldots,\tau_{d-1}$ as in \cref{eq:maxdegsystem}. We have now converted the problem from one in which we need to find the coordinates of solutions to one in which we ask whether there exists a solution. Specifically, we have $d$ polynomials $p_1,\ldots,p_d$ in $d-1$ variables, and we search for a function that can tell us when these polynomials have a common root. The resultant is a polynomial function of the coefficients of $p_1,\ldots,p_d$ that vanishes if and only if $p_1,\ldots,p_d$ have a common root \cite[Chapt. 3]{cox2005ag}. We can use the resultant to find the values of $x_d$ for which the system has a possible solution.

To transform into an eigenvalue problem, a matrix representation of the resultant is used; that is, we take a matrix $R(p_1,\ldots,p_d)$ whose entries are polynomial functions of the coefficients of $p_1,\ldots,p_d$ and whose determinant is the resultant. Thus, $R$ is singular if and only if the polynomials $p_1,\ldots,p_d$ have a common root. Crucially, this moves the problem from algebraic geometry into linear algebra, where we have more powerful numerical tools.

Since we have hidden the variable $x_d$, the matrix representation of the resultant is a matrix polynomial $R(x_d)$ that is singular at the values of $x_d$ for which $p_1,\ldots,p_d$ have a common root in the variables $x_1,\ldots,x_{d-1}$. This is a polynomial eigenvalue problem whose solutions give all the possible $x_d$ coordinates of the roots. Once the $x_d$ coordinates are found, the other coordinates can be found by reducing the problem and repeating the method, or, for some resultants, by extracting solutions from the eigenvectors of $R(x_d)$. 

In computational algebraic geometry, a choice of resultant, such as the Dixon/Cayley resultant, leads to a well-studied method \cite{noferini2016instability,nakatsukasa2015bezout} for finding all the roots of multivariate polynomial systems. We use a hidden variable resultant method as another building block in our PMEP solver. In particular, just as the polynomial resultant is an effective way to convert a nonlinear rootfinding problem into a linear eigenvalue problem, our resultant generalization is an effective way to construct a linear problem from an originally nonlinear PMEP, giving a GEP that can be handled by a straightforward use of the QZ algorithm.

\subsection{The Polynomial Hidden Variable Dixon Resultant}

Many resultant choices exist for solving polynomial systems.  Our method for PMEPs generalizes the Dixon resultant, originally given for three polynomials in two variables in \cite{dixon1908res}, though it can be constructed for any number of polynomials~\cite{noferini2016instability}.\footnote{The Dixon resultant is sometimes referred to as the Cayley resultant, and in two dimensions is often called the B\'{e}zout resultant.}
Given the polynomial system in \cref{eq:polysystem}, with each $p_i$ of maximal $d$-degree $\tau_1,\ldots,\tau_d$ as in \cref{eq:maxdegsystem}, define
\begin{align*}
&f_{\text{Dixon}}(s_1,\ldots,s_{d-1},t_1,\ldots,t_{d-1},x_d) = \\ & \hspace{.3cm} \frac{\begin{vmatrix}
    p_1(s_1,s_2,\ldots,s_{d-1},x_d) & p_1(t_1,s_2,\ldots,s_{d-1},x_d) & \cdots & p_1(t_1,t_2,\ldots,t_{d-1},x_d) \\
    p_2(s_1,s_2,\ldots,s_{d-1},x_d) & p_2(t_1,s_2,\ldots,s_{d-1},x_d) & \cdots & p_2(t_1,t_2,\ldots,t_{d-1},x_d) \\
    \vdots & \vdots & \ddots & \vdots \\
    p_d(s_1,s_2,\ldots,s_{d-1},x_d) & p_d(t_1,s_2,\ldots,s_{d-1},x_d) & \cdots & p_d(t_1,t_2,\ldots,t_{d-1},x_d) \\
\end{vmatrix}}{\prod_{i=1}^{d-1}(s_i-t_i)},
\end{align*}
where the vertical bars denote taking the matrix determinant. It is clear from examining the numerator that the degree of $f_{\text{Dixon}}$ in $s_i$ is bounded above by $\alpha_i =  i\tau_i - 1$ and the degree in $t_i$ is bounded by $\beta_i = (d-i)\tau_i-1$. The degree in $x_d$ is bounded by $d \tau_d$.
We can expand $f_{\text{Dixon}}$ in the monomial basis as
\begin{equation} \label{eq:polyDixonexpansion}
f_{\text{Dixon}} = \sum_{i_1=0}^{\alpha_1} \cdots \sum_{i_{d-1}=0}^{\alpha_{d-1}} \sum_{j_1=0}^{\beta_1} \cdots \sum_{j_{d-1}=0}^{\beta_{d-1}} a_{i_1, \ldots, i_{d-1}, j_1, \ldots, j_{d-1}}(x_d) \prod_{k=1}^{d-1} s_k^{i_k} \prod_{k=1}^{d-1} t_k^{j_k},
\end{equation}
where $a_{i_1, \ldots, i_{d-1}, j_1, \ldots, j_{d-1}}(x_d)$ is a polynomial function of $x_d$.
We then define the hidden variable Dixon resultant $R(x_d)$ to be the $ \left( (d-1)! \prod_{k=1}^{d-1} \tau_k \right) \times \left( (d-1)! \prod_{k=1}^{d-1} \tau_k \right) $ matrix polynomial function of $x_d$ given by unfolding the expansion in \cref{eq:polyDixonexpansion}. In particular, we unfold so that the columns of $R(x_d)$ are indexed by the $s$ monomials and the rows by the $t$ monomials. 

The Dixon resultant has the key property of resultants from \cref{subsec:hvr} that allows the construction of a hidden variable resultant method; $R(x_d)$ has eigenvalues $x_d^*$ that are the $d$th coordinate of each solution to \cref{eq:polysystem} \cite{noferini2016instability}. Better still, the $x_1,\ldots,x_{d-1}$ coordinates can be extracted from the eigenvectors of $R(x_d)$. Thus, this construction can be used to create a polynomial hidden variable resultant solver.

\subsubsection{Multivariate Block Vandermonde Vectors}

In \cite{noferini2016instability}, it is proved that the eigenvectors of the Dixon resultant have multivariate Vandermonde structure. As mentioned in \cref{subsec:opdet}, the eigenvectors of the GEPs resulting from the operator determinants method have a block/tensor structure derived from the original eigenvectors \cite{atkinson1972multieig}. Our PMEP method will have similarly structured eigenvectors.  

For ease of notation,  suppose $U$ is a tensor with $U_{i_1,\ldots,i_d,\ell} = \prod_{k=1}^d x_k^{i_k} \mathbf{v}_{(\ell)}, \ell = 1,\ldots,r,$ with $\mathbf{v} \in \C^r$ for some integer $r$, and $\mathbf{v}_{(\ell)}$ the $\ell$th entry of $\mathbf{v}$. Then, we say that the vector $V = \text{vec}(U)$ is a multivariate block Vandermonde vector.  If $r=1$, then $V$ is a multivariate Vandermonde vector without blocks. If $d=1$ and $r=1$, then $V$ is a standard univariate Vandermonde vector.

\begin{table}[]
    \centering
    \caption{Methods for multivariate linear equations,  rootfinding,  and eigenproblems. Moving up or to the left is both a specialization of PMEPs and of our method. We make this connection concrete in \cref{app:conn}.}
    \resizebox{\columnwidth}{!}{
    \begin{tabular}{ccc}
    \\
    \hline
    \begin{tabular}{c}
    Linear System \\
    $\sum_{j=1}^d a_{ij} x_j = b_i, 1 \leq i \leq d$ \\
    Solved by Cramer's rule
    \end{tabular} & &
    \begin{tabular}{c}
    Multivariate Eigenproblem \\
    $W_i(\mathbf{x}) \mathbf{v}_i = 0,1 \leq i \leq d$ \\
    Solved by operator determinants \cite{atkinson1972multieig}
    \end{tabular} \\ \\
    \begin{tabular}{c}
    Multivariate Polynomial Rootfinding \\
    $p_i(\mathbf{x}) = 0,1 \leq i \leq d$ \\
    Solved by hidden variable Dixon resultant \cite{noferini2016instability}
    \end{tabular} & & 
    \begin{tabular}{c}
    Multivariate Polynomial Eigenproblem \\
    $P_i(\mathbf{x})\mathbf{v}_i = 0,1 \leq i \leq d$ \\
    Solved by tensor Dixon resultant
    \end{tabular} \\
    \hline
	\end{tabular}}
    \label{tab:evp}
\end{table}

\subsection{Developing a Resultant for Matrix Polynomials}
\label{subsec:dev}

%We first hide a variable to generalize hidden variable resultants \cite{noferini2016instability,nakatsukasa2015bezout}. 
%Then, construct our resultant. 

Given a PMEP with $d$ equations and $d$ variables in~\cref{eq:MPEigForm},  we want to develop a hidden variable resultant method. With this goal, we first hide the variable $x_d$ so that we have a PMEP with $d$ equations and $d-1$ variables. Then we need to find a matrix function that is singular if and only if the matrix polynomials $P_i$ have a common eigenvalue.  

A first attempt at constructing such a matrix function might be a straightforward generalization of the scalar Dixon resultant.  Unfortunately,  this does not work.   To see this,  consider the following two univariate matrix polynomials from~\cite{gohberg1976resultants}:
\begin{equation}
A(\lambda) = \begin{pmatrix}
\lambda - 1 & 0 \\
1 & \lambda -1
\end{pmatrix}, \quad 
B(\lambda) = \begin{pmatrix}
\lambda  & 1 \\
0 & \lambda -2
\end{pmatrix}.
\label{eq:Example}
\end{equation}
We want to construct a matrix that is singular if and only if the two matrix polynomials have a common eigenvalue. The straightforward generalization of the two-dimensional Dixon resultant (known as the B\'{e}zout resultant),  involves replacing scalar multiplication with matrix multiplication in its construction.  This approach gives us the following attempt at a generalization: 
\[
f_{\text{Dixon}}(s,t) = \frac{\begin{vmatrix}
    A(s) & A(t) \\
   B(s)  & B(t) \\
\end{vmatrix}}{(s-t)} = \frac{A(s)B(t)-A(t) B(s)}{(s-t)} = \begin{pmatrix} 1& 1 \\ -1 & -1 \end{pmatrix}.
\]
Because the Dixon function is constant in $s$ and $t$, the resultant is equal to the Dixon function, which is singular, even though the matrix polynomials do not have a common eigenvalue. Thus, this straightforward generalization of the Dixon resultant to the matrix case is inadequate.  

This approach fails because it does not account for the fact that the matrix polynomials can have different eigenvectors but common eigenvalues. It is necessary to take some inspiration from the operator determinants method of \cref{subsec:opdet} to properly account for the distinct eigenvectors of each matrix polynomial.

In \cite{heinig1977bezoutiant}, another possible generalization of the scalar Dixon resultant is given in two dimensions as
\begin{equation} \label{eq:bez}
f_{\text{Dixon}}(s,t) = \frac{A(s) \otimes B(t)-A(t) \otimes B(s)}{(s-t)}.
\end{equation}
It is proved that $f_{\text{Dixon}}(s,t)$ is a resultant for bivariate matrix polynomials, i.e., the matrix it generates after unfolding the analogous expansion in \cref{eq:polyDixonexpansion} has the property that it is singular if and only if the two matrix polynomials have a common eigenvalue. This is a successful approach precisely because the Kronecker products correctly tie the eigenvectors of the original matrix polynomials together; this construction has eigenvectors that are related to Kronecker products of the original eigenvectors of $A(\lambda),B(\lambda)$.

For our example in~\cref{eq:Example},  the resultant is still equal to the Dixon function, which is now
\begin{equation}
f_{\text{Dixon}}(s,t) =  \frac{A(s) \otimes B(t)-A(t) \otimes B(s)}{(s-t)} = \begin{pmatrix} 1& 1 & 0 & 0 \\ 0 & -1 & 0 & 0 \\ -1 & 0 & 1 & 1 \\ 0 & -1 & 0 & -1 \end{pmatrix},
\end{equation}
which is nonsingular, showing that the matrix polynomials $A$ and $B$ do not have a common eigenvalue.

The construction in \cref{eq:bez} is studied further in \cite{barnett1980bezoutian,lerer1982bezoutian}, while \cite{gohberg1976resultants} gives a similar definition based on the Sylvester resultant. However,  these papers do not study the possibility of developing a hidden variable resultant method. It is suggested in \cite{iwata2015signeddist} that a hidden variable resultant method based on the definition in \cref{eq:bez} is possible, but the authors instead use the theory to prove the validity of a customized approach for solving a particular PMEP related to computing signed distances between ellipsoids. We were motivated by the suggestion in~\cite{iwata2015signeddist} to fully develop a hidden variable resultant method for PMEPs in any dimension. 

%In two dimensions, our contribution is the practical realization of this hidden variable method. In higher dimensions, no construction of a resultant for matrix polynomials is available in the literature, so we give our own (see \cref{sec:outline}), which generalizes the definition in \cref{eq:bez}, along with the appropriate theory (see \cref{sec:theory}).

\subsection{Linearizations of Polynomial Eigenvalue Problems}
\label{subsec:lin}

Similarly to the polynomial resultant, our generalization of the hidden variable Dixon resultant constructs a univariate matrix polynomial $R(x_d)$ whose eigenvalues are the $d$th coordinates of solutions to the PMEP. The benefit of this is that we can lean on numerically robust methods to solve the resulting polynomial eigenvalue problem.

A polynomial eigenvalue problem (PEP)  is an equation $P(\lambda) \mathbf{v} = 0$, where $P(\lambda) = \sum_{i=0}^m P_i \lambda^i$ is a univariate matrix polynomial with $P_i \in \C^{n \times n}$, and the task is to find eigenvalue-eigenvector pairs $\lambda \in \C,\mathbf{v} \in \C^n$ that solve the equation. There are various well-studied linearizations such as companion or colleague  \cite{mackey2006vs,nakatsukasa2016stability} that construct a linear matrix polynomial $A-\lambda B$ with the same eigenvalues as $P(\lambda)$. In the monomial basis, we use the companion linearization, which is given by
$$
\begin{bmatrix}
P_m & 0 & \cdots & 0 \\
0 & I_n & \ddots & \vdots \\
\vdots & \ddots & \ddots & 0 \\
0 & \cdots & 0 & I_n
\end{bmatrix} \lambda + 
\begin{bmatrix}
P_{m-1} & P_{m-2} & \cdots & P_0 \\
-I_n & 0 & \cdots & 0 \\
\vdots & \ddots & \ddots & \vdots \\
0 & \cdots & -I_n  & 0
\end{bmatrix}.
$$

The key property of the companion linearization is that it has the same eigenvalues as $P(\lambda)$. In addition, the eigenvector structure of the linearization is known \cite{mackey2006vs}, so we can extract the eigenvectors of $P(\lambda)$ from the eigenvectors of the linearization. This is important for our method, as the eigenvectors give the $x_1,\ldots,x_{d-1}$ coordinates of the solutions. We use a companion-like linearization to solve the PEP that our PMEP solver constructs.  Once linearized, we numerically solve the GEP with the QZ algorithm.

\section{A Generic PMEP Solver}
\label{sec:outline}

Our PMEP solver is built from a combination of a hidden variable resultant method (see~\cref{subsec:hvr}) and operator determinants (see~\cref{subsec:opdet}), which gives us a multivariate resultant method for matrix polynomials.  We use this to construct a PEP that gives one of the coordinates of the solutions to the PMEP in \cref{eq:MPEigForm}. The remaining coordinates can be extracted from the eigenvectors of the resultant PEP.

\subsection{The Hidden Variable Tensor Dixon Resultant}

Given a PMEP of the form
\begin{equation*}
P_i(x_1,\ldots,x_d) \mathbf{v}_i = 0, \quad 1 \leq i \leq d,
\end{equation*}
with each $P_i$ being of maximal $d$-degree $\tau_1,\ldots,\tau_d$ (see~\cref{eq:maxdegsystem}),  our hidden variable tensor Dixon resultant is constructed as follows.\footnote{We outline every aspect of the algorithm in the monomial basis, as it is algebraically neat, but one can develop the algorithm in any degree graded basis. In particular, we implement the algorithm in the Chebyshev basis.}
Define
\begin{align*}
&f_{\text{Dixon}}(s_1,\ldots,s_{d-1},t_1,\ldots,t_{d-1},x_d) = \\ & \frac{\begin{vmatrix}
    P_1(s_1,s_2,\ldots,s_{d-1},x_d) & P_1(t_1,s_2,\ldots,s_{d-1},x_d) & \cdots & P_1(t_1,t_2,\ldots,t_{d-1},x_d) \\
    P_2(s_1,s_2,\ldots,s_{d-1},x_d) & P_2(t_1,s_2,\ldots,s_{d-1},x_d) & \cdots & P_2(t_1,t_2,\ldots,t_{d-1},x_d) \\
    \vdots & \vdots & \ddots & \vdots \\
    P_d(s_1,s_2,\ldots,s_{d-1},x_d) & P_d(t_1,s_2,\ldots,s_{d-1},x_d) & \cdots & P_d(t_1,t_2,\ldots,t_{d-1},x_d) \\
\end{vmatrix}_{\otimes}}{\prod_{i=1}^{d-1}(s_i-t_i)},
\end{align*}
where the notation $\begin{vmatrix} M \end{vmatrix}_{\otimes}$ denotes taking the block determinant with multiplication replaced by Kronecker products (see~\cref{eq:leibniz}).  We expand this using the Leibniz formula in terms of products in the order $P_1 \otimes P_2 \otimes \cdots \otimes P_d$. As in the scalar case, the degree of $f_{\text{Dixon}}$ in $s_i$ is bounded above by $\alpha_i =  i\tau_i - 1$, the degree in $t_i$ is bounded by $\beta_i = (d-i)\tau_i-1$, and the degree in $x_d$ is bounded by $d \tau_d$. Therefore, we find that 
\begin{equation} \label{eq:Dixonexpansion}
f_{\text{Dixon}} = \sum_{i_1=0}^{\alpha_1} \cdots \sum_{i_{d-1}=0}^{\alpha_{d-1}} \sum_{j_1=0}^{\beta_1} \cdots \sum_{j_{d-1}=0}^{\beta_{d-1}} A_{i_1, \ldots, i_{d-1}, j_1, \ldots, j_{d-1}}(x_d) \prod_{k=1}^{d-1} s_k^{i_k} \prod_{k=1}^{d-1} t_k^{j_k},
\end{equation}
where $A_{i_1, \ldots, i_{d-1}, j_1, \ldots, j_{d-1}}(x_d)$ is a $\left( \prod_{i=1}^d n_i \right) \times \left( \prod_{i=1}^d n_i \right)$ matrix polynomial function of $x_d$.
We then define the hidden variable tensor Dixon resultant $R(x_d)$ to be the $ \left( \prod_{i=1}^d n_i \cdot (d-1)! \prod_{k=1}^{d-1} \tau_k \right) \times \left( \prod_{i=1}^d n_i \cdot (d-1)! \prod_{k=1}^{d-1} \tau_k \right) $ matrix polynomial function of $x_d$ given by unfolding the expansion in \cref{eq:Dixonexpansion}. In particular, we unfold so that the block columns of $R(x_d)$ of width $\prod_{i=1}^d n_i$ are indexed by the $s$ monomials and the block rows of height $\prod_{i=1}^d n_i$ are indexed by the $t$ monomials.

Now we have a PEP whose eigenvalues give the $x_d$ coordinates of solutions to the original PMEP. We can solve this with known linearizations and the QZ algorithm as described in \cref{subsec:lin}. Moreover, we can actually extract the $x_1,\ldots,x_{d-1}$ coordinates of the solutions from this PEP as well.
In \cref{sec:theory}, we prove that the eigenvector of $R(x_d^*)$ corresponding to an eigenvalue $x_d^*$ is the multivariate block Vandermonde vector $\text{vec}(U)$, where $U$ is a tensor with
$$
U_{i_1,\ldots,i_{d-1},\ell} = \prod_{k=1}^{d-1} (x_k^*)^{i_k} \mathbf{v}_{(\ell)}, \quad 1\leq \ell \leq \prod_{i=1}^d n_i,
$$ 
where $1 \leq i_j \leq \alpha_j$ for $j=1,\ldots,d-1$, $\mathbf{v} \in \C^{\prod_{i=1}^d n_i}$, and $\mathbf{v}_{(\ell)}$ denotes the $\ell$th entry of $\mathbf{v}$. We use the structure of the eigenvectors to extract the coordinates $x_1^*,\ldots,x_{d-1}^*$ of the solutions. 

Therefore, our generalization of the Dixon resultant suggests a complete algorithm to solve a generic PMEP. We construct the resultant according to the above definition, linearize and solve the resulting PEP with QZ, and then extract the $x_1,\ldots,x_{d-1}$ coordinates from the eigenvectors of the resultant. For a generic PMEP,  an outline of the algorithm is given below, with detailed explanation to follow.

\begin{algorithm}
\textbf{MultiPolyEig: Method to Solve Generic PMEPs.}

\hrule

\vspace{.1cm}

\textbf{Input:} PMEP in \cref{eq:MPEigForm}.
\begin{algorithmic}[1]
\State Construct the hidden variable tensor Dixon resultant $R(x_d)$.
\State Linearize $R(x_d)$ to $A - x_d B$.
\State Solve with $QZ$. Obtain pairs $x_d^*,V^*$ of eigenvalues and right eigenvectors of $R(x_d)$.
\State Extract solution coordinates $x_i^*$ as ratios of entries of $V^*$.
\State Perform a residual check to discard spurious solutions.
\end{algorithmic}
\textbf{Output:} All solutions $(x_1^*,\ldots,x_d^*)$ to \cref{eq:MPEigForm}.
\label{alg:mpe}
\end{algorithm}

While our method is designed to work in its simplest form for generic $d$-degree PMEPs, it is useful to illustrate our algorithm with an example that can be calculated by hand. Consider the following PMEP: 

\begin{equation} \label{eq:simpleex}
\begin{aligned}
P_1 &= \begin{bmatrix}
1 & 0 \\
0 & 1 \\
\end{bmatrix} x^2 +
\begin{bmatrix}
0 & 1 \\
2 & 0
\end{bmatrix}, \\
P_2 &= \begin{bmatrix}
0 & 1 \\
-1 & 0 \\
\end{bmatrix} xy +
\begin{bmatrix}
-1 & 0 \\
-1 & 1
\end{bmatrix}.
\end{aligned}
\end{equation}
We will illustrate our generic PMEP solver on this example. 

\subsection{Constructing the Dixon function}

Instead of directly constructing the numerator of the Dixon function, we have found it to be slightly faster to compute it at many values of $x_d$ and interpolate. The analysis of \cref{sec:theory} gives a bound on the degree of $f_{\text{Dixon}}$ in $x_d$, so we construct $f_{\text{Dixon}}$ at a sufficient number of values of $x_d$ given its degree in $x_d$. For each value of $x_d$, we compute the numerator of $f_{\text{Dixon}}$ by directly expanding the block determinant formula.

We then use an observation of \cite{nakatsukasa2015bezout}, originally due to \cite{nakatsukasa2017vs}, that the division by $\prod_{i=1}^{d-1} (s_i-t_i)$ corresponds to solving a Sylvester equation, to obtain $f_{\text{Dixon}}$. This requires one iteration of a modified Bartels--Stewart algorithm for each $i=1,\ldots,d-1$. Subsequently, we unfold the tensor in \cref{eq:Dixonexpansion} into a matrix. Finally, we interpolate at each matrix entry to obtain the tensor Dixon matrix polynomial $R(x_d)$.\footnote{In practice, we construct $f_{\text{Dixon}}$ in the basis of Chebyshev polynomials of the first kind, and so we use Chebyshev nodes for interpolation. The interpolation is done efficiently using the fast Fourier transform, which we modify to mimic a discrete Chebyshev transform.}

%{\color{red}{AT: Shouldn't this be a footnote because I thought we were doing everything in the monomial basis. }}

We can calculate the entire first step directly for the system in \cref{eq:simpleex}. In this case, the numerator of $f_{\text{Dixon}}$ is
\begingroup
\setlength\arraycolsep{2pt}
\begin{align*}
&P_1(s,y) \otimes  P_2(t,y) -  P_1(t,y) \otimes P_2(s,y) = \\ &
\begin{bmatrix}
0 & s^2t-st^2 & 0 & t-s \\
st^2-s^2t  & 0 & s-t & 0 \\
0 & 2(t-s) & 0 & s^2t-st^2  \\
2(s-t) & 0 & st^2-s^2t  & 0
\end{bmatrix}
\hspace{-.1cm} y \hspace{-.1cm} + \hspace{-.2cm}
\begin{bmatrix}
t^2-s^2 & 0 & 0 & 0 \\
t^2-s^2  & s^2-t^2 & 0 & 0 \\
0 & 0 & t^2-s^2 & 0 \\
0 & 0 & t^2-s^2 & s^2-t^2
\end{bmatrix}.
\end{align*}
\endgroup
Then, dividing by $s-t$ gives
$$
f_{\text{Dixon}} = 
\begin{bmatrix}
0 & st & 0 & -1 \\
-st  & 0 & 1 & 0 \\
0 & -2 & 0 & st  \\
2 & 0 & -st  & 0
\end{bmatrix}
y +
\begin{bmatrix}
-s-t & 0 & 0 & 0 \\
-s-t & s+t & 0 & 0 \\
0 & 0 & -s-t & 0 \\
0 & 0 & -s-t & s+t
\end{bmatrix}.
$$
The resultant $R(y)$ is the matrix expansion of the coefficients of this function in $s$ and $t$, which is
\setlength{\tabcolsep}{0pt}
$$
\begin{array}{@{}c@{}c}
& \hspace{-.7cm} \begin{array}{*{9}{c}} \bold{1} &&&&&&&& \bold{s} \end{array} \\
\begin{array}{*{1}{c}} \bold{1} \\ \\ \\ \\ \bold{t} \end{array}  &
\begin{bmatrix}
0 & 0 & 0 & -1 & 0 & 0 & 0 & 0 \\
0 & 0 & 1 & 0 & 0 & 0 & 0 & 0 \\
0 & -2 & 0 & 0 & 0 & 0 & 0 & 0 \\
2 & 0 & 0 & 0 & 0 & 0 & 0 & 0 \\
0 & 0 & 0 & 0 & 0 & 1 & 0 & 0 \\
0 & 0 & 0 & 0 & -1 & 0 & 0 & 0 \\
0 & 0 & 0 & 0 & 0 & 0 & 0 & 1 \\
0 & 0 & 0 & 0 & 0 & 0 & -1 & 0 \\
\end{bmatrix}y\,\,\,\,+  \
\end{array}
\begin{array}{*{1}{c}}
\begin{array}{*{8}{c}} \bold{1} &&&&&&& \bold{s} \end{array} \\
\begin{bmatrix}
0 & 0 & 0 & 0 & -1 & 0 & 0 & 0 \\
0 & 0 & 0 & 0 & -1 & 1 & 0 & 0 \\
0 & 0 & 0 & 0 & 0 & 0 & -1 & 0 \\
0 & 0 & 0 & 0 & 0 & 0 & -1 & 1 \\
-1 & 0 & 0 & 0 & 0 & 0 & 0 & 0 \\
-1 & 1 & 0 & 0 & 0 & 0 & 0 & 0 \\
0 & 0 & -1 & 0 & 0 & 0 & 0 & 0 \\
0 & 0 & -1 & 1 & 0 & 0 & 0 & 0 \\
\end{bmatrix}
\end{array} \hspace{-.2cm}.
$$

In general, we obtain a matrix polynomial $R(x_d)$ that is not necessarily linear, and we linearize as detailed in \cref{subsec:lin}. Then we solve using $QZ$ and extract eigenvalue-eigenvector pairs $x_d^*,V^*$ of $R(x_d)$.

\subsection{Extracting Solution Coordinates for Generic PMEPs}
We know that the eigenvectors of $R(x_d)$ have size $\prod_{i=1}^d n_i$ blocks of the form $\prod_{k=1}^{d-1} (x_k^*)^{i_k} \mathbf{v}$. We can visualize this structure as
$$
V^* = \begin{bmatrix}
\mathbf{v} \\
x_1^* \mathbf{v} \\
(x_1^*)^2 \mathbf{v} \\
\vdots \\
x_2^* \mathbf{v} \\
\vdots
\end{bmatrix}.
$$
Consequently, we can obtain the coordinates $x_1^*,\ldots,x_{d-1}^*$ of the solutions by dividing any entry in the block $x_i \mathbf{v}$ by the corresponding entry in the block $\mathbf{v}$. In practice, we average over many such quotients for greater stability.

We demonstrate this on a single eigenvalue-eigenvector pair from our running example. We obtain eigenvalue $y^* \approx -1.3606$ and corresponding eigenvector 
$$
\begin{bmatrix}
\\ \\
\color{green!40!black!70}{\mathbf{v}} \\
\\ \\
\color{red!50!black!70}{x^* \mathbf{v}} \\
\\ \\
\end{bmatrix} \approx 
\begin{bmatrix} \color{green!40!black!70}{-0.7071} \\  \color{green!40!black!70}{0.4370} \\  \color{green!40!black!70}{1.0000} \\  \color{green!40!black!70}{-0.6180} \\  \color{red!50!black!70}{-0.5946} \\ \color{red!50!black!70}{0.3675} \\ \color{red!50!black!70}{0.8409} \\ \color{red!50!black!70}{-0.5197} \end{bmatrix}.
$$

The block colored red has the form $x^* \mathbf{v}$ and the block colored green has the form $\mathbf{v}$, where $(x^*,y^*)$ is an eigenvalue of the original PMEP in \cref{eq:simpleex}. Therefore, we can find $x^*$ by dividing any entry in the red block by the corresponding entry in the green block. For numerical stability, we divide all entries and average to obtain $x^* \approx 0.8409$. Completing this process for all of the eigenvalue-eigenvector pairs of $R(y)$ yields the full set of $8$ solutions. 

%{\color{red}{AT: Don't think we need the table below. Do you? }}
%EG: Agreed
%\begingroup
%\setlength{\tabcolsep}{10pt}
%$$
%\begin{tabular}{cc}
%\hline
%x & y \\
%\hline
%  -1.1892 + 0.0000i & -0.5197 + 0.0000i \\
%  -1.1892 + 0.0000i & 1.3606 + 0.0000i \\
%   1.1892 + 0.0000i & 0.5197 + 0.0000i \\
%  1.1892 + 0.0000i & -1.3606 + 0.0000i \\
%    0.0000 - 1.1892i & 0.0000 - 1.3606i \\
%  0.0000 - 1.1892i & 0.0000 + 0.5197i \\
%  0.0000 + 1.1892i & 0.0000 - 0.5197i \\
%  0.0000 + 1.1892i & 0.0000 + 1.3606i \\
%  \hline
%\end{tabular}
%$$
%\endgroup

\subsection{Residual Check}

The algorithm can find spurious or extremely inaccurate solutions, which is especially an issue as the size of the PMEP increases, even for generic problems. This is partly due to potential numerical instability for large problems, but there can also be spurious symbolic roots. Even without numerical error, our resultant can have eigenvalues that are not coordinates of solutions to the original PMEP. In the language of algebraic geometry, this is because what we calculate is not a precise analogue of the true resultant, which vanishes only at coordinates of solutions, but instead, a multiple of the resultant, and the extraneous factor can have roots that do not give solutions to the original problem.\footnote{There are more details about this issue in \cite[Chapt. ~3]{cox2005ag} and \cite{saxenathesis} for the scalar polynomial resultant. Moreover, \cite[Chapt. ~3]{cox2005ag} gives a precise definition of the resultant that makes it unique up to a scalar multiple. Then, all other resultant-like constructions are multiples of this uniquely defined resultant.} 

Therefore, we implement a residual check to ensure the quality of all returned solutions. We find the maximum of the smallest singular value of $P_i(\mathbf{x}^*)$ for $i =1,\ldots,d$. For the roots found above, the maximum residual over all the roots is $10^{-15}$.

\section{Theory for the Tensor Dixon Resultant for Generic PMEPs}
\label{sec:theory}

We now prove several properties of the tensor Dixon resultant for generic systems (defined in \cref{subsec:gen}). Most importantly, we demonstrate for any PMEP in \cref{eq:MPEigForm} that the resultant polynomial that we construct has eigenvalues corresponding to all solutions of the PMEP. For generic PMEPs, we also prove that the resulting matrix polynomial is nonsingular. The analysis of this section is based on analysis of the scalar Dixon resultant in \cite{saxenathesis} and the hidden variable Dixon resultant in \cite{noferini2016instability}. The first proposition we need proves that matrix-vector multiplication by the resultant is equivalent to evaluating the Dixon function.

\begin{prop}
\label{prop:tensorexp}
Let $P = \{P_i : 1 \leq i \leq d \}$ be a PMEP as in \cref{eq:MPEigForm}, expressed in maximal degree $\tau_1,\ldots,\tau_d$ form as in \cref{eq:maxdegsystem}, where the matrix coefficients of $P_i$ are $n_i \times n_i$ for $n_1,\ldots,n_d \in \mathbb{Z}_{> 0}$. Let $N =  \prod_{i=1}^d n_i$, $\alpha_i =  i\tau_i - 1$, and $\beta_i = (d-i)\tau_i-1$. Let $U$ be a $\alpha_1 \times \cdots \times \alpha_{d-1} \times N$ tensor with
$$
U_{i_1,\ldots,i_{d-1},\ell} = \prod_{k=1}^{d-1} (x_k^*)^{i_k} \mathbf{v}_{(\ell)}, \quad 1 \leq i_k \leq \alpha_k, \quad 1 \leq \ell \leq N,
$$
with $\mathbf{v} \in \C^{N}$, $x_k^* \in \C, 1 \leq k \leq d-1$. Let $V = \text{vec}(U)$ be the corresponding multivariate block Vandermonde vector. Let $R(x_d)$ be the hidden variable tensor Dixon resultant for $P$, and $f_{\text{Dixon}}$ the associated Dixon function. Let $B$ be a $\beta_1 \times \cdots \times \beta_{d-1} \times N \times N$ tensor with
$$
B_{i_1,\ldots,i_{d-1},\ell_1,\ell_2} = \prod_{k=1}^{d-1} (t_k)^{i_k} (I_{N})_{(\ell_1,\ell_2)}, \quad 1 \leq i_k \leq \beta_k, \quad1 \leq \ell_i \leq N,
$$
with $I_N$ the $N \times N$ identity matrix and variables $t_k, 1 \leq k \leq d-1$. Let $W$ be the matrix with $N$ columns given by unfolding $B$ along the first $d-1$ coordinates and let $x_d^* \in \C$. Then
\begin{equation}
	W R(x_d^*) V = f_{\text{Dixon}}(x_1^*,\ldots,x_{d-1}^*,t_1,\ldots,t_{d-1},x_d^*) \mathbf{v}.
\end{equation}
\end{prop}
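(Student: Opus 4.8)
The plan is to prove the identity by directly unwinding the definitions of $R(x_d)$, $V$, and $W$, using that each is an unfolding of a tensor with Vandermonde-type structure. Recall from \cref{eq:Dixonexpansion} that $R(x_d)$ is the unfolding of the coefficient tensor $A_{i_1,\ldots,i_{d-1},j_1,\ldots,j_{d-1}}(x_d)$ of $f_{\text{Dixon}}$, with block columns indexed by the $s$-exponents $(i_1,\ldots,i_{d-1})$ and block rows by the $t$-exponents $(j_1,\ldots,j_{d-1})$; thus the $\bigl((j_1,\ldots,j_{d-1}),(i_1,\ldots,i_{d-1})\bigr)$ block of $R(x_d^*)$ is exactly $A_{i_1,\ldots,i_{d-1},j_1,\ldots,j_{d-1}}(x_d^*)\in\C^{N\times N}$. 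Meanwhile $V=\text{vec}(U)$ has $s$-exponent block $(i_1,\ldots,i_{d-1})$ equal to $\bigl(\prod_{k}(x_k^*)^{i_k}\bigr)\mathbf v$, and $W$ carries the $t$-exponent structure $\prod_k t_k^{j_k}\,I_N$ coming from $B$.

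First I would compute $R(x_d^*)V$ block by block. The $t$-exponent block $(j_1,\ldots,j_{d-1})$ of $R(x_d^*)V$ is
\[
\sum_{i_1=0}^{\alpha_1}\!\cdots\!\sum_{i_{d-1}=0}^{\alpha_{d-1}} A_{i_1,\ldots,i_{d-1},j_1,\ldots,j_{d-1}}(x_d^*)\Bigl(\prod_{k=1}^{d-1}(x_k^*)^{i_k}\Bigr)\mathbf v =: C_{j_1,\ldots,j_{d-1}}\,\mathbf v .
\]
Next I would identify $C_{j_1,\ldots,j_{d-1}}$ with a coefficient of the partially evaluated Dixon function: substituting $s_k=x_k^*$ and $x_d=x_d^*$ into \cref{eq:Dixonexpansion}, and invoking the degree bounds established before the statement (degree at most $\alpha_k$ in $s_k$, at most $\beta_k$ in $t_k$, so the finite sums in \cref{eq:Dixonexpansion} capture all of $f_{\text{Dixon}}$), gives
\[
f_{\text{Dixon}}(x_1^*,\ldots,x_{d-1}^*,t_1,\ldots,t_{d-1},x_d^*) = \sum_{j_1=0}^{\beta_1}\!\cdots\!\sum_{j_{d-1}=0}^{\beta_{d-1}} C_{j_1,\ldots,j_{d-1}}\prod_{k=1}^{d-1}t_k^{j_k}.
\]
Finally I would contract $R(x_d^*)V$ against $W$: since $B_{j_1,\ldots,j_{d-1},\ell_1,\ell_2}=\prod_k t_k^{j_k}(I_N)_{\ell_1,\ell_2}$, pairing the $(j_1,\ldots,j_{d-1})$ block $C_{j_1,\ldots,j_{d-1}}\mathbf v$ with the corresponding slice of $W$ multiplies that block by the scalar $\prod_k t_k^{j_k}$ and preserves the $N$-vector structure, so summing over all $t$-exponents yields
\[
W\,R(x_d^*)\,V = \sum_{j_1,\ldots,j_{d-1}}\Bigl(\prod_{k=1}^{d-1}t_k^{j_k}\Bigr)C_{j_1,\ldots,j_{d-1}}\,\mathbf v = f_{\text{Dixon}}(x_1^*,\ldots,x_{d-1}^*,t_1,\ldots,t_{d-1},x_d^*)\,\mathbf v,
\]
which is the assertion.

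The computations are routine; the only care needed is bookkeeping. One must fix a single row/column ordering for the unfolding of \cref{eq:Dixonexpansion} and use it consistently for $R$, $V$, and $W$; check that the block dimensions line up, which rests on the identity $\prod_{k=1}^{d-1}(\alpha_k+1)=\prod_{k=1}^{d-1}(\beta_k+1)=(d-1)!\prod_{k=1}^{d-1}\tau_k$ that makes $R(x_d)$ square; and verify that the monomial supports $\{0,\ldots,\alpha_k\}$ and $\{0,\ldots,\beta_k\}$ built into $V$ and $W$ exactly match those appearing in $f_{\text{Dixon}}$, so that no terms are dropped or spuriously introduced when passing between the matrix products and the polynomial $f_{\text{Dixon}}$. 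I do not anticipate any genuine obstacle beyond this careful alignment of indices and dimensions.
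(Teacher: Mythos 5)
Your proposal is correct and follows essentially the same route as the paper's proof: the paper simply asserts that both sides of the identity equal the polynomial expression $\bigl(\sum_{i_1,\ldots,i_{d-1}}\sum_{j_1,\ldots,j_{d-1}} A_{i_1,\ldots,i_{d-1},j_1,\ldots,j_{d-1}}(x_d^*)\prod_k (x_k^*)^{i_k}\prod_k t_k^{j_k}\bigr)\mathbf v$ with $A$ as in \cref{eq:Dixonexpansion}, and your argument is an explicit block-by-block unwinding of exactly that claim. Your additional bookkeeping remarks (consistency of the unfolding convention, the dimension identity making $R(x_d)$ square, and matching of the monomial supports) are valid points of care but do not change the structure of the argument.
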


\begin{proof}
	Both sides are equivalent to the matrix polynomial
	$$
	\left( \sum_{i_1=0}^{\alpha_1} \cdots \sum_{i_{d-1}=0}^{\alpha_{d-1}} \sum_{j_1=0}^{\beta_1} \cdots \sum_{j_{d-1}=0}^{\beta_{d-1}} A_{i_1, \ldots, i_{d-1}, j_1, \ldots, j_{d-1}}(x_d^*) \prod_{k=1}^{d-1} (x_k^*)^{i_k} \prod_{k=1}^{d-1} t_k^{j_k} \right) \mathbf{v},
	$$
	where $A$ is as in \cref{eq:Dixonexpansion}.
\end{proof}

Now we can prove the result that allows our algorithm to find solutions to PMEPs. In particular, we show that the resultant $R(x_d)$ has eigenvalues that are the $d$th coordinates of solutions to the PMEP in \cref{eq:MPEigForm} and that the corresponding eigenvectors have multivariate block Vandermonde structure.

\begin{prop}
\label{thm:dixonev}
With the same setup as \cref{prop:tensorexp},  suppose that $x_1^*,\ldots,x_d^*$ and $\mathbf{v}_1,\ldots,\mathbf{v}_d$ are the solutions to the PMEP and let $\mathbf{v} = \mathbf{v}_1 \otimes \cdots \otimes \mathbf{v}_d$.  Let $U$ be an $\alpha_1 \times \cdots \times \alpha_{d-1} \times N$ tensor with
$$
U_{i_1,\ldots,i_{d-1},\ell} = \prod_{k=1}^{d-1} (x_k^*)^{i_k} \mathbf{v}_{(\ell)}, \quad 1 \leq i_k \leq \alpha_k, \quad 1 \leq \ell \leq N,
$$
with $\mathbf{v}_{(\ell)}$ the $\ell$th entry of $\mathbf{v}$. Then $x_d^*$ is an eigenvalue of the hidden variable tensor Dixon resultant $R(x_d)$ with eigenvector $V = \text{vec}(U)$.
\end{prop}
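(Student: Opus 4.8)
The plan is to push everything through \cref{prop:tensorexp} and then exploit the Kronecker structure of the operator-determinant numerator. Set $\mathbf{v} = \mathbf{v}_1 \otimes \cdots \otimes \mathbf{v}_d$ and $V = \text{vec}(U)$ as in the statement. By \cref{prop:tensorexp}, $W R(x_d^*) V = f_{\text{Dixon}}(x_1^*,\ldots,x_{d-1}^*,t_1,\ldots,t_{d-1},x_d^*)\,\mathbf{v}$, an identity of vectors whose entries are polynomials in $t_1,\ldots,t_{d-1}$. It therefore suffices to prove (i) that the right-hand side is the zero polynomial vector, (ii) that this forces $R(x_d^*) V = 0$, and then to note $V \neq 0$.

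For (i) I would write $f_{\text{Dixon}} = \Delta/\prod_{i=1}^{d-1}(s_i-t_i)$, where $\Delta = \sum_{\sigma \in S_d} \textbf{sgn}(\sigma)\, M_{1,\sigma(1)} \otimes \cdots \otimes M_{d,\sigma(d)}$ is the Leibniz expansion of the block determinant in the numerator, $M_{i,j}$ being $P_i$ evaluated with its first $j-1$ spatial arguments equal to $t_1,\ldots,t_{j-1}$, the remaining ones equal to $s_j,\ldots,s_{d-1}$, and the last slot equal to $x_d$, with products taken in the fixed order $P_1 \otimes \cdots \otimes P_d$. After the substitution $s_k = x_k^*$ and $x_d = x_d^*$ the first block column becomes $M_{i,1} = P_i(x_1^*,\ldots,x_d^*)$. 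Now apply the mixed-product property: for each $\sigma$, $(M_{1,\sigma(1)} \otimes \cdots \otimes M_{d,\sigma(d)})(\mathbf{v}_1 \otimes \cdots \otimes \mathbf{v}_d) = (M_{1,\sigma(1)}\mathbf{v}_1) \otimes \cdots \otimes (M_{d,\sigma(d)}\mathbf{v}_d)$, and the unique index $i_0 = \sigma^{-1}(1)$ contributes the factor $M_{i_0,1}\mathbf{v}_{i_0} = P_{i_0}(x_1^*,\ldots,x_d^*)\mathbf{v}_{i_0} = 0$, since $(x_1^*,\ldots,x_d^*)$ solves the PMEP with eigenvectors $\mathbf{v}_1,\ldots,\mathbf{v}_d$. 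Hence every summand vanishes and $\Delta(x_1^*,\ldots,x_{d-1}^*,t_1,\ldots,t_{d-1},x_d^*)\mathbf{v} = 0$. Since $\Delta = \big(\prod_{i}(s_i-t_i)\big) f_{\text{Dixon}}$ as a polynomial identity, substituting yields $\big(\prod_{i=1}^{d-1}(x_i^*-t_i)\big)\, f_{\text{Dixon}}(x_1^*,\ldots,x_{d-1}^*,t_1,\ldots,t_{d-1},x_d^*)\mathbf{v} = 0$ in $\C[t_1,\ldots,t_{d-1}]^N$; as each $x_i^* - t_i$ is a nonzero element of this integral domain, the cofactor $f_{\text{Dixon}}(x_1^*,\ldots,x_{d-1}^*,t_1,\ldots,t_{d-1},x_d^*)\mathbf{v}$ is zero, which is (i).

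For (ii), $W R(x_d^*)V = 0$ as a polynomial vector in $t_1,\ldots,t_{d-1}$; decomposing $R(x_d^*)V$ into the $N$-blocks indexed by the $t$-monomials, $W$ recombines these blocks with the distinct monomial weights $\prod_k t_k^{j_k}$, so linear independence of monomials forces every block of $R(x_d^*)V$ to vanish, i.e.\ $R(x_d^*)V = 0$. Nonvanishing of $V$ is immediate: the sub-block of $V$ associated with the constant $s$-monomial equals $\mathbf{v} = \mathbf{v}_1 \otimes \cdots \otimes \mathbf{v}_d$, which is nonzero because each $\mathbf{v}_i$ is an eigenvector. The hard part will be step (i): one must expand the block determinant in the matching order $P_1 \otimes \cdots \otimes P_d$ so that the mixed-product property aligns with $\mathbf{v}_1 \otimes \cdots \otimes \mathbf{v}_d$, and the observation that every Leibniz term contains exactly one first-column block is precisely what makes all terms collapse — this is the reason Kronecker products rather than ordinary products appear in the tensor Dixon resultant, echoing the operator-determinants construction of \cref{subsec:opdet}. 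Everything else (the reduction via \cref{prop:tensorexp}, clearing $\prod(s_i-t_i)$ in an integral domain, and linear independence of monomials) is routine bookkeeping.
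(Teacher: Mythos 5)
Your proof is correct and follows the same route as the paper's: reduce $W R(x_d^*) V$ to an evaluation of the Dixon function via \cref{prop:tensorexp}, show that this evaluation annihilates $\mathbf{v}$, and then use linear independence of the $t$-monomials to force $R(x_d^*)V = 0$. The only place you go beyond the paper is in actually proving the step the paper labels ``by construction'' --- that $f_{\text{Dixon}}(x_1^*,\ldots,x_{d-1}^*,t_1,\ldots,t_{d-1},x_d^*)\mathbf{v}=0$ --- by expanding the block-Kronecker determinant with Leibniz, applying the mixed-product rule so that each summand picks up a vanishing factor $P_{i_0}(x_1^*,\ldots,x_d^*)\mathbf{v}_{i_0}$ from the first block column, and cancelling $\prod_i(x_i^*-t_i)$ in the integral domain $\C[t_1,\ldots,t_{d-1}]$; this is a worthwhile and correct elaboration, not a different approach.
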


\begin{proof}
	By \cref{prop:tensorexp}, matrix-vector products with $R(x_d)$ are equivalent to evaluating the Dixon function. By construction, $f_{\text{Dixon}}(x_1^*,\ldots,x_{d-1}^*,t_1,\ldots,t_{d-1},x_d^*)$ is singular with right eigenvector $\mathbf{v}$, so 
	\begin{equation} \label{eq:fullrank}
	W R(x_d^*) V = f_{\text{Dixon}}(x_1^*,\ldots,x_{d-1}^*,t_1,\ldots,t_{d-1},x_d^*) \mathbf{v} = 0,
	\end{equation}
where $W$ is a block matrix consisting of copies of the identity multiplied by independent basis elements as in \cref{prop:tensorexp}. In particular, $W R(x_d^*) V$ is in general a matrix polynomial in variables $t_1,\ldots,t_d$, with coefficients given by the blocks of $R(x_d^*) V$. \Cref{eq:fullrank} implies that this matrix polynomial is identically $0$, so all its coefficients, which are the blocks of $R(x_d^*) V$, are $0$, so $R(x_d^*) V = 0$, as desired. 
\end{proof}

%{\color{red}{We need to say that $W$ is full rank here so that this implication holds.}} EG: resolved

We now know that a hidden variable tensor Dixon method constructs a PEP with eigenvalue-eigenvector pairs that give the coordinates of the solutions to \cref{eq:MPEigForm}.  Unfortunately,  the constructed PEP can be singular (see~\cref{sec:finetuning}),  which requires additional computational work to circumvent. Therefore, examining a case in which the PEP is always nonsingular is useful.

Recall that we say that a PMEP is generic $d$-degree if it is of the form in \cref{eq:maxdegsystem} with coefficients that are independent variables. For $d \leq 3$, we prove that the resultant of a generic $d$-degree system is nonsingular.

\begin{thm} \label{thm:nonsing}
Let $d \leq 3$. Suppose that $R(x_d)$ is a hidden variable tensor Dixon resultant of a generic $d$-degree PMEP as in \cref{eq:maxdegsystem}. Then, $R(x_d)$ is nonsingular.
\end{thm}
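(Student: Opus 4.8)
The plan is to show that $R(x_d)$ is nonsingular by exhibiting a single value $x_d^* \in \C$ at which $\det R(x_d^*) \neq 0$; since the coefficients of the generic system are independent indeterminates, $\det R(x_d)$ is a polynomial in those indeterminates (and in $x_d$), and showing it is not identically zero is equivalent to nonsingularity. Equivalently, because we work over a field of rational functions in the coefficient indeterminates, it suffices to prove that $R(x_d^*) V = 0$ forces $V = 0$ for the generic evaluation, i.e. that the kernel-vector analysis of \cref{thm:dixonev} can be reversed. The natural route is to combine \cref{prop:tensorexp} with a dimension/degree count: the construction shows that for each genuine solution $(x_1^*,\ldots,x_d^*)$ of the PMEP one obtains an eigenpair, and for a generic $d$-degree system the number of such solutions (counted with multiplicity, via a Bernstein/BKK or resultant-degree bound) exactly matches $N\cdot(d-1)!\prod_{k=1}^{d-1}\tau_k$, the size of $R(x_d)$; a matrix polynomial of that size with that many finite eigenvalues cannot be singular.

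First I would set up the reduction: assume for contradiction $\det R(x_d) \equiv 0$ as a polynomial in the indeterminates, so $R$ has a nontrivial polynomial null vector, or equivalently its "normal rank" drops. Then I would invoke the scalar results this paper builds on — the analysis of \cite{saxenathesis} and \cite{noferini2016instability} shows that for a generic scalar $d$-degree system the hidden-variable Dixon matrix is nonsingular and its eigenvalues are in bijection (with multiplicity) with the finitely many roots. The key structural observation is that the tensor Dixon resultant for a generic $d$-degree PMEP is, up to the Kronecker/tensor bookkeeping, built from the \emph{same} combinatorial skeleton as $d$ separate scalar Dixon matrices tied together by Kronecker products — the operator-determinant idea of \cref{subsec:opdet}. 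Concretely, for $d=2$ the construction in \cref{eq:bez} specializes to $\bigl(A(s)\otimes B(t) - A(t)\otimes B(s)\bigr)/(s-t)$, and one can show its unfolding factors (generically) through the scalar Bézout/Dixon matrix of a pair of generic polynomials, whose nonsingularity is classical; for $d=3$ one argues similarly but the Leibniz expansion of the $3\times 3$ block determinant with Kronecker products must be handled term by term.

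The steps, in order: (i) fix a convenient generic specialization of the coefficients and a convenient $x_d^*$ (for instance choosing coefficients so that $P_1,\ldots,P_d$ become diagonal or triangular, reducing the PMEP to $N$ decoupled scalar polynomial systems); (ii) show that under such a specialization $R(x_d^*)$ becomes (block-permutation-equivalent to) a direct sum / Kronecker product of scalar Dixon matrices of generic scalar systems; (iii) apply the known scalar nonsingularity result (from \cite{noferini2016instability,saxenathesis}) to each scalar block; (iv) conclude $\det R(x_d^*) \neq 0$ at the specialization, hence $\det R \not\equiv 0$ generically. The restriction $d \leq 3$ enters because for $d \geq 4$ the degree bounds $\alpha_i = i\tau_i - 1$, $\beta_i = (d-i)\tau_i - 1$ overshoot the actual degrees of $f_{\text{Dixon}}$, so the unfolded matrix is strictly larger than the true resultant and is forced to be singular — the generic count only balances for $d \leq 3$ (this is exactly the scalar phenomenon from \cite{noferini2016instability}).

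The main obstacle I expect is step (ii): making precise that the specialization genuinely decouples $R(x_d^*)$ into scalar Dixon blocks, since the Kronecker products in the Leibniz expansion do not commute and the unfolding that indexes block rows/columns by $t$- and $s$-monomials interleaves the tensor factors in a nonobvious way. One has to check that a diagonalizing specialization of all $P_i$ simultaneously makes every Leibniz term of the block determinant block-diagonal in a compatible basis, so that the $(s_i - t_i)$-division (the Sylvester/Bartels–Stewart step) also respects the decomposition. A cleaner alternative, which I would pursue in parallel, is to avoid specialization entirely and instead bound $\operatorname{rank} R(x_d^*)$ from below by producing $N\cdot(d-1)!\prod_k \tau_k$ linearly independent \emph{columns} directly, using that the generic system has exactly that many simple solutions $(x_1^*,\ldots,x_d^*)$ with block-Vandermonde eigenvectors of distinct eigenvalues $x_d^*$ — linear independence of eigenvectors for distinct eigenvalues then gives full rank. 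Verifying the exact solution count for the generic $d$-degree PMEP (a multihomogeneous Bézout count over the $P_i$) is the quantitative heart of that approach, and reconciling it with the matrix size is where the $d \leq 3$ hypothesis must again be used.
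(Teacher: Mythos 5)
Your overall strategy---exhibit a concrete specialization of the indeterminate coefficients at which $\det R(x_d)\neq 0$---is the same high-level tactic the paper uses, but your chosen specialization and the downstream argument differ from the paper's, and one of your supporting claims is wrong in a way that matters.

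The paper does not specialize to diagonal $P_i$. It chooses the structured nondiagonal system $P_j = \bigl(\prod_{i=1}^{d-1}(I_{n_j}x_i^{\tau_i}-A_j)\bigr)x_d^{\tau_d}$ with $A_j$ still an indeterminate $n_j\times n_j$ matrix, and then \emph{directly computes} $f_{\text{Dixon}}$ for this system: it factors as a scalar polynomial in $s,t,x_d$ times a fixed Kronecker-structured constant $c=\prod_{i<j}(B_i-B_j)$. The key observation, which the paper can only push through symbolically for $d\leq 3$, is that after expanding the scalar factor, every monomial appearing in $s$ and $t$ is distinct, so $R(x_d)$ is block diagonal with all blocks equal to $\pm c\,x_d^{d\tau_d}$, hence nonsingular. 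In particular, the paper's proof is entirely self-contained: it does \emph{not} invoke any scalar Dixon nonsingularity result. Your diagonal-specialization route is genuinely different: the blocks of the Dixon numerator become diagonal, so $R(x_d)$ is permutation-equivalent to a direct sum of $N=\prod n_i$ scalar Dixon matrices, and you would finish by citing scalar nonsingularity. That reduction is cleaner than you make it sound---for diagonal matrices the Kronecker bookkeeping you list as "the main obstacle" is a non-issue, since Kronecker products of diagonal matrices are diagonal and the $(s_i-t_i)$-division acts entrywise---but it pushes the whole burden onto a scalar result. That is exactly what the paper's authors are avoiding: they note in a footnote that they could not verify the scalar claim from Saxena's thesis for general $d$, and their own proof of the $n_j=1$ case ($d\leq 3$) is subsumed within the theorem you are trying to prove, so citing it would be circular. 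You would need to track down an independent scalar source for $d=3$.

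The substantive error is your explanation of the $d\leq 3$ hypothesis. You assert that for $d\geq 4$ the degree bounds $\alpha_i,\beta_i$ "overshoot" the true degrees and that $R(x_d)$ "is forced to be singular." The paper says the opposite: it states explicitly that the authors "strongly suspect that the theorem holds for any $d$," and that they have \emph{symbolically verified} for $4\leq d\leq 10$ that $R(x_d)$ can be put in block upper-triangular form with invertible diagonal blocks. The restriction $d\leq 3$ is a limitation of the proof technique---for small $d$ the right-hand product in the factored $f_{\text{Dixon}}$ is trivial enough that every monomial is distinct and block-diagonality follows immediately; for larger $d$ the combinatorics of the expansion were not resolved---not a genuine phenomenon where the matrix becomes singular. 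Your alternative dimension-count argument (matching a multihomogeneous B\'ezout count to the matrix size and invoking linear independence of eigenvectors) would also need extra care for the same reason: $\det R(x_d)$ is in general a strict multiple of the true resultant with extraneous factors, so counting genuine solutions cannot, on its own, rule out $\det R\equiv 0$; if $\det R\equiv 0$ the count is vacuous.
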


\begin{proof}
The hidden variable tensor Dixon resultant $R(x_d)$ is nonsingular for generic systems if its determinant is a nonzero polynomial function of the coefficients of the PMEP. To prove that a polynomial is nonzero, finding any specialization of the coefficients that makes it nonzero is sufficient. Therefore, for each $d$-degree, it is sufficient to find an instance of a $d$-degree system for which $R(x_d)$ is nonsingular; i.e., its determinant is a nonzero polynomial function of $x_d$. Similarly to the proof of \cite[Theorem 2.6.2]{saxenathesis}, we choose the system 
\begin{equation} \label{eq:nonsingsystem}
	P_j =  \left( \prod_{i=1}^{d-1} (I_{n_j} x_i^{\tau_i} - A_j) \right) x_d^{\tau_d}, \quad 1 \leq j \leq d,
\end{equation}
with $A_j \in \C^{n_j \times n_j}$ indeterminates.
By examining the Dixon matrix, it is clear that
\begin{equation} \label{eq:dixexp}
f_{\text{Dixon}} = c \left( \prod_{i=1}^{d-1} \frac{s_i^{\tau_i} - t_i^{\tau_i}}{s_i-t_i} \prod_{j=1}^{i-1} \left( s_i^{\tau_i} - t_j^{\tau_j} \right) \right) x_d^{d \cdot \tau_d},
\end{equation}
with $
c = \prod_{i=1}^{d} \prod_{j=i+1}^{d} (B_i-B_j),
$
and $B_i = I_{n_1} \otimes \cdots \otimes I_{n_{i-1}} \otimes A_i \otimes I_{n_{i+1}} \otimes \cdots \otimes I_{n_d}$. In particular, this follows from the fact that the Dixon function must be divisible by each of the linear factors in \cref{eq:dixexp} and its degree bounded above by $\alpha_i =  i\tau_i - 1$ in $s_i$ and by $\beta_i = (d-i)\tau_i-1$ in $t_i$. 

We now rearrange and expand
\begin{equation} \label{eq:gensystemexp}
f_{\text{Dixon}} = c \left( \prod_{i=1}^{d-1} s_i^{\tau_i-1} + \cdots + t_i^{\tau_i-1} \right) \left( \prod_{i=1}^{d-1} \prod_{j=1}^{i-1} \left( s_i^{\tau_i} - t_j^{\tau_j} \right) \right) x_d^{d \cdot \tau_d}.
\end{equation}
The left-hand product has $\prod_{i=1}^{d-1} \tau_i$ distinct terms, each of which has a unique exponent in both $s$ and $t$; the right-hand product has $(d-1)!$ monomials in both $t$ and $s$. This gives a Dixon resultant of size $\left( \prod_{i=1}^d {n_i} \right) \left( (d-1)! \prod_{i=1}^{d-1} \tau_i \right)$. 

For $d=1,2$, the right-hand product is trivial. for $d=3$, the expansion is just $s_2^{\tau_2}-t_1^{\tau_1}$, Therefore for $d \leq 3$, $f_{\text{Dixon}}$ has the same property as the left hand expansion; each monomial is unique in both $t$ and $s$. Therefore the expansion of the resultant is block diagonal with diagonal blocks of size $\left( \prod_{i=1}^d {n_i} \right)$ all equal to $\pm c x_d^{d \cdot \tau_d}$, which implies that it is nonsingular.
\end{proof}

Many practical applications are in $d \leq 3$, though we believe that the above theorem holds for all $d$. By expanding the right-hand products of \cref{eq:gensystemexp}, we have symbolically verified that for $4\leq d \leq 10$,  there is an order on the monomials in the expansion that gives a Dixon resultant which is block upper triangular with diagonal blocks of size $\left( \prod_{i=1}^d {n_i} \right)$ all equal to $\pm c x_d^{d \cdot \tau_d}$. This gives us a computer proof of~\cref{thm:nonsing} for $4\leq d\leq 10$.  We have been unable to get a proof for any $d$; however, we strongly suspect that the theorem holds for any $d$.\footnote{In the proof of \cite[Theorem 2.6.2]{saxenathesis}, it is claimed that a similar system in the scalar case gives a Dixon matrix with determinant $c^{d! \prod_{i=1}^d \tau_i}$ for any $d$,  but we have been unable to verify the claim.  We believe that verifying this claim would be a crucial first step to proving~\cref{thm:nonsing} for any $d$. }

The practical interpretation of the previous theorem is that for randomly generated maximal degree systems, $R(x_d)$ is nonsingular with probability $1$. In practice, many systems are structured in ways that violate this assumption. In particular, total degree systems are common, as well as systems in which the matrix coefficients are themselves singular.

\section{Fine Tuning the Algorithm}
\label{sec:finetuning}

As shown in~\cref{sec:theory},  for randomly generated maximal-degree systems the generic algorithm from \cref{sec:outline} works with probability $1$.  In particular, the resulting eigenvalue problem is nonsingular with probability $1$. For practical applications, attention must be paid to the possibly singular polynomial eigenvalue problem that can result from the tensor resultant construction.
 
\subsection{Singular Polynomial Eigenvalue Problems}
\label{subsec:singgep}

If the assumptions of \cref{thm:nonsing} are violated, then the resultant pencil may be singular. Consider the system
\begin{equation} \label{eq:simplesingex}
\begin{aligned}
P_1 &= \begin{bmatrix}
0 & 1 \\
0 & 0 \\
\end{bmatrix} x^2 +
\begin{bmatrix}
0 & 1 \\
2 & 0
\end{bmatrix}, \\
P_2 &= \begin{bmatrix}
0 & 1 \\
0 & 0 \\
\end{bmatrix} xy +
\begin{bmatrix}
-1 & 0 \\
-1 & 1
\end{bmatrix}.
\end{aligned}
\end{equation}
By hiding $y$,  the Dixon function is
\begin{equation}
 \begin{bmatrix}
0 &  0 & 0 & st \\
0 &  0 & 0 & 0\\
0 & -2 & 0 & 0 \\
0 & 0 &  0 & 0 
  \end{bmatrix} y 
  +  \begin{bmatrix}
0 &  0 & -s-t & 0 \\
0 &  0 & -s-t & s+t \\
0 & 0 & 0 & 0 \\
0 & 0 &  0 & 0 
      \end{bmatrix},
\end{equation}
and our resultant is
\begin{equation} \label{eq:singexres}
 \begin{bmatrix}
 0 &  0 & 0 & 0 & 0 & 0 & 0 & 0 \\
0 &  0 & 0 & 0 & 0 & 0 & 0 & 0 \\
0 & -2 & 0 & 0 & 0 & 0 & 0 & 0\\
0 & 0 &  0 & 0 & 0 & 0 & 0 & 0 \\
0 & 0 & 0 & 0 & 0 &  0 & 0 & 1 \\
0 & 0 & 0 & 0 & 0 &  0 & 0 & 0\\
0 & 0 & 0 & 0 & 0 & 0 & 0 & 0 \\
0 & 0 & 0 & 0 & 0 & 0 & 0 & 0 
 \end{bmatrix} y +  \begin{bmatrix}
 0 &  0 & 0 & 0 & 0 & 0 & -1 & 0 \\
0 &  0 & 0 & 0 & 0 & 0 & -1 & 1 \\
0 & 0 & 0 & 0 & 0 & 0 & 0 & 0\\
0 & 0 &  0 & 0 & 0 & 0 & 0 & 0 \\
0 & 0 & -1 & 0 & 0 &  0 & 0 & 0 \\
0 & 0 & -1 & 1 & 0 &  0 & 0 & 0\\
0 & 0 & 0 & 0 & 0 & 0 & 0 & 0 \\
0 & 0 & 0 & 0 & 0 & 0 & 0 & 0 
 \end{bmatrix}.
\end{equation}
 The resultant has $3$ zero rows and columns, so for any $y$ it is of rank no more than $5$, while we would usually expect rank $8$ for generic/random $y$.
 
Unfortunately, this issue is not limited to contrived examples. As seen in \cref{subsec:aef}, singular resultant pencils arise often in practice. To resolve this difficulty, we use the projection method proposed in \cite{bor2024sing}. The method projects a singular pencil $R(x_d)$ orthogonally onto a pencil $U R(x_d) V^*$ from which we can extract a smaller nonsingular pencil. After linearization, the standard MATLAB QZ algorithm can be applied. This allows us to reliably find eigenvalues and eigenvectors of singular PEPs such as the one in \cref{eq:singexres} or those arising from practical examples in \cref{sec:num}.

As discussed in detail in \cite{bor2024sing}, another obvious approach is to linearize the singular pencil first and then apply a method for singular GEPs, such as from \cite{hochstenbach2019sing,hochstenbach2023sing}. Our experiments agree with the suggestion of \cite{bor2024sing} that these methods do not perform as well as projecting the pencil. In addition, we have found in our numerical experiments that the projection method outperforms the other methods described in \cite{bor2024sing}.

\subsection{Extracting Solutions From Eigenvectors for Singular Problems}
\label{subsec:evs}

Aside from introducing difficulties in solving the resultant pencil eigenvalue problem, the singularity also corrupts the information in the eigenvectors we use to extract roots. One extremely costly option is to plug in the $x_d$ coordinate and reduce the original PMEP to a large set of smaller problems, which can be solved by repeated application of the hidden variable resultant method. We leave this only as a last resort as it involves constructing many new hidden variable resultants and is, in our observation, often unacceptably slow. 

Instead, we find in practice that the null space of the pencil $R(x_d)$ for practical problems is often sparse enough to extract the coordinates $x_1^*,\ldots,x_{d-1}^*$ of the eigenvalues. We call the null space of $R(x_d)$, where $x_d$ is a variable, the generic null space. We estimate the sparsity pattern of the generic null space by evaluating $R(x_d)$ at a random complex number. Then we observe that, due to the block structure of the eigenvectors, the $x_i$ coordinate can be found by dividing any entry in the block associated to $x_i$ by the corresponding entry in the block associated to $1$. Some of these entries are corrupted by the generic null space; the eigenvectors now have the same underlying block Vandermonde structure plus some vector in the generic null space. In practice, we can choose entries that are sufficiently independent of the null space; we filter out entries where the effect of the null space is above some small numerical threshold, usually around $~10^{-13}$, and calculate from those remaining.

For better numerical accuracy, we also average among the ratios where the divisor is among the largest. The proportion of the entries used is a parameter that can be tuned by the user for optimal performance; in our experience, the optimal parameter varies somewhat among different applications.

%%%Illustrate it?

\subsection{Extracting Solutions From Eigenvectors for Linear Problems}
\label{subsec:linear}

An additional difficulty occurs for PMEPs of maximal degree $\tau_1,\ldots,\tau_d$, where one or more of $\tau_1,\ldots,\tau_d$ is $1$. If $\tau_i = 1$ for $i < d$, then there is no block of the eigenvectors corresponding to any power of $x_i$ other than $x_i^0 = 1$, so it is impossible to extract the $i$th coordinate of the solutions by taking ratios of eigenvector entries. If possible, we avoid this case by swapping the coordinate order so that $\tau_d = 1$ but $\tau_i > 1$ for $i < d$. For some problems (i.e. when $\tau_i = \tau_j = 1$ for $i \neq j$), this is not possible. In these cases, we plug in the solutions that can be extracted from the eigenvectors one at a time, and then solve the smaller subproblems that result.

\subsection{Repeated Eigenvalues}

Many practical systems have multiple solutions that share an $x_d$ coordinate, which causes the resultant $R(x_d)$ to have a multiple eigenvalue. The space of eigenvectors associated to a multiple eigenvalue $x_d^*$ has dimension $> 1$, making it impossible to extract the coordinates $x_1^*,\ldots,x_{d-1}^*$ of the solutions. We can avoid this by making a random orthogonal change of coordinates $\begin{bmatrix} x_1' & \cdots &  x_d' \end{bmatrix}^{\top} = Q \begin{bmatrix} x_1 & \cdots &  x_d \end{bmatrix}^{\top}$, where $Q$ is $d \times d$ and orthogonal; this avoids repeated eigenvalue coordinates with probability one. We use this transformation for both of the practical experiments in \cref{sec:num}.

\section{Numerical Experiments}
\label{sec:num}

The primary use of the general algorithm we have constructed is to circumvent the time-consuming design of custom linearizations for new and potentially larger PMEPs that have not yet been approached. For previously studied problems, successful case-by-case methods already exist. We would not expect our general method to outcompete a refined method tailored to a particular problem on all metrics. However, PMEPs from \cite{pons2017aef,pons2018aef} and from \cite{gravenkamp2025leakywaves} still serve as useful benchmarks for our work and key tests for some of the numerical fine-tuning from \cref{sec:finetuning}. We compare our method to the pre-existing custom method for each problem. MATLAB code for all practical experiments is given in \cite{graf2025pmep}.

\subsection{Aeroelastic Flutter}
\label{subsec:aef}

A quadratic two parameter eigenvalue problem results from analyzing aeroelastic flutter \cite{pons2017aef,pons2018aef}. One model is \cite[eq. 10]{pons2018aef}
\begin{equation} \label{eq:aef}
	((M_0 + G_0) + G_1 \tau + G_2 \tau^2 -K_0 \Lambda) \mathbf{x} = 0,
\end{equation}
with $2 \times 2$ matrices given in \cite{pons2018aef}. A stability analysis of this model aims to find real solutions, so the authors construct \cite[eq. 12]{pons2018aef}
\begin{equation} \label{eq:AEF}
\begin{aligned}
	((M_0 + G_0) + G_1 \tau + G_2 \tau^2 -K_0 \Lambda) \mathbf{x} &= 0, \\
    ((\overline{M_0} + \overline{G_0}) + \overline{G_1} \tau + \overline{G_2} \tau^2 - \overline{K_0} \Lambda) \overline{\mathbf{x}} &= 0,
\end{aligned}
\end{equation}
which has a solution only for the real solutions of \cref{eq:aef}.
The authors of \cite{pons2017aef,pons2018aef} use several ad hoc methods as well as a two parameter quadratic solver from \cite{bor2010quadeig,hochstenbach2012quadeig} to solve \cref{eq:AEF}. In each case, the particular structure of this problem is exploited to construct a linear two parameter eigenvalue problem with the same solutions. We solve \cref{eq:AEF} with our tensor Dixon resultant for comparison.

As is frequently the case in practice, the structure of \cref{eq:AEF} causes the resultant pencil $R(\Lambda)$ to be singular. It is quadratic in $\Lambda$, with $8 \times 8$ matrix coefficients, but has a zero column, so it is rank $\leq 7$ for any value of $\Lambda$. The projection method of \cite{bor2024sing} allows us to extract a $7 \times 7$ quadratic pencil that is nonsingular before linearizing and solving with QZ. As explained in \cref{subsec:linear}, by hiding $\Lambda$ instead of $\tau$, we preserve enough Vandermonde structure in the eigenvectors to extract the $\tau$-coordinates of the solutions. We use the method from \cref{subsec:evs} to avoid the noise from the generic null space of $R(\Lambda)$ and accurately calculate the $\tau$-coordinates.

The problem in \cref{eq:AEF} has four solutions, which are illustrated in \cite[fig. 3]{pons2018aef}. We compare the solutions from our method with the results of quad\_twopareig from \cite{bor2010quadeig,hochstenbach2012quadeig}. The results in \cref{tab:AEF} illustrate that we obtain identical solutions up to $10^{-12}$.

\begin{table}
\resizebox{\textwidth}{!}{
\begin{tabular}{c}
$\tau$-coordinates \\
\hline
\begin{tabular}{cc}
MultiPolyEig & quad\_twopareig \\
\hline
$\color{green!40!black!70}-3.31759890823\color{red!50!black!70}7174 \color{green!40!black!70} + 0.0000000000000\color{red!50!black!70}14i$ & $\color{green!40!black!70}-3.31759890823\color{red!50!black!70}8001 \color{green!40!black!70}- 0.0000000000000\color{red!50!black!70}05i$ \\
 $\color{green!40!black!70}-0.912270188816\color{red!50!black!70}418 \color{green!40!black!70}- 0.0000000000000\color{red!50!black!70}35i$ & $\color{green!40!black!70}-0.912270188816\color{red!50!black!70}352 \color{green!40!black!70}+ 0.0000000000000\color{red!50!black!70}26i$ \\
 $\color{green!40!black!70}-0.91227018881640\color{red!50!black!70}4 \color{green!40!black!70}- 0.0000000000000\color{red!50!black!70}14i$ & $\color{green!40!black!70}-0.91227018881640\color{red!50!black!70}1 \color{green!40!black!70}- 0.0000000000000\color{red!50!black!70}00i$ \\
 $\color{green!40!black!70}-0.50086581799891\color{red!50!black!70}8 \color{green!40!black!70}- 0.00000000000000\color{red!50!black!70}1i$ & $\color{green!40!black!70}-0.50086581799891\color{red!50!black!70}7 \color{green!40!black!70}- 0.00000000000000\color{red!50!black!70}0i$
\end{tabular}
\\
 \hline
\end{tabular}}

\resizebox{\textwidth}{!}{
\begin{tabular}{c}
$\Lambda$-coordinates \\
\hline
\begin{tabular}{cc}
MultiPolEig & quad\_twopareig \\
\hline
 $\color{green!40!black!70}-0.00000000000\color{red!50!black!70}1507 \color{green!40!black!70}- 0.000000000000\color{red!50!black!70}393i$ & $\color{green!40!black!70}0.00000000000\color{red!50!black!70}0000 \color{green!40!black!70} + 0.000000000000\color{red!50!black!70}004i$ \\
 $\color{green!40!black!70}-4.137012225428\color{red!50!black!70}568 \color{green!40!black!70}+ 0.0000000000000\color{red!50!black!70}94i$ & $\color{green!40!black!70}-4.137012225428\color{red!50!black!70}681 \color{green!40!black!70}- 0.0000000000000\color{red!50!black!70}34i$ \\
 $\color{green!40!black!70}4.1370122254286\color{red!50!black!70}82 \color{green!40!black!70}- 0.0000000000000\color{red!50!black!70}34i$ & $\color{green!40!black!70}4.1370122254286\color{red!50!black!70}14 \color{green!40!black!70}+ 0.0000000000000\color{red!50!black!70}15i$ \\
 $\color{green!40!black!70}-0.00000000000000\color{red!50!black!70}1 \color{green!40!black!70}+ 0.00000000000000\color{red!50!black!70}2i$ & $\color{green!40!black!70}-0.00000000000000\color{red!50!black!70}2 \color{green!40!black!70}- 0.00000000000000\color{red!50!black!70}3i$
 \end{tabular}
 \\
 \hline
\end{tabular}}

\caption{Results of MultiPolyEig \cite{graf2025pmep} and quad\_twopareig \cite{bor2025mpe} applied to aeroelastic flutter problem. The coloring shows that our results agree with those of quad\_twopareig \cite{bor2025mpe} up to at least $10^{-12}$ for all solutions.}
\label{tab:AEF}

\end{table}

\subsection{Leaky Waves in Layered Structures}
\label{subsec:leakywaves}

\begin{figure}
\begin{center}
\includegraphics[scale=.15]{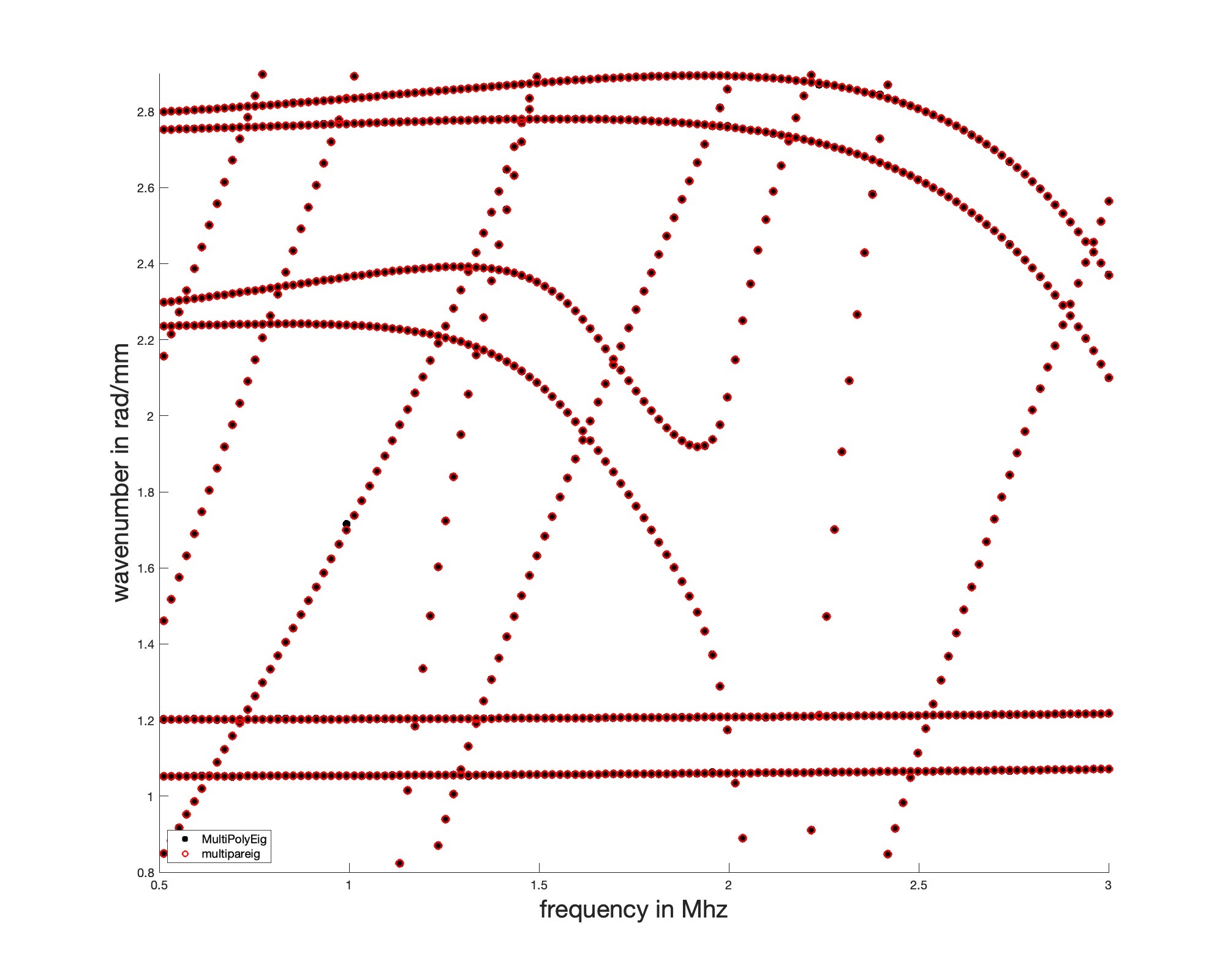}
\end{center}
\vspace{-1cm}
\caption{Performance of ad hoc linearization and multipareig  \cite{bor2025rji} versus MultiPolyEig \cite{graf2025pmep} on the system in \cref{eq:LeakyWaves}.}
\label{fig:LeakyWaves}
\end{figure}

In \cite{gravenkamp2025leakywaves}, the authors exploit multiparameter eigenvalue problems to compute wavenumbers of leaky waves in layered structures coupled to unbounded media. The initial model is given by \cite[eq. 8]{gravenkamp2025leakywaves}
\begin{equation}
\left( -k^2 E_0 + i k E_1 - E_2 + \omega^2 M + R(k) \right) \phi = 0,
\end{equation}
with $\phi$ the eigenvector, where $R$ has nonpolynomial dependence on $k$. Resolving the nonpolynomial terms gives a three parameter eigenvalue problem of the form
\begin{equation}
\label{eq:LeakyWaves}
\begin{aligned}
    \left( -E_2 + \omega^2 M + ik E_1 + i\eta_1 R_1 + i \eta_2 R_2 - k^2 E_0 \right) u &= 0, \\
    \left( \begin{bmatrix}
        0 & -\kappa_1^2 \\
        1 & 0
    \end{bmatrix} + i \eta_1 \begin{bmatrix}
        1 & 0 \\
        0 & 1
    \end{bmatrix} + k^2 \begin{bmatrix}
        0 & 1 \\
        0 & 0
    \end{bmatrix}\right) x_1 &= 0, \\
    \left( \begin{bmatrix}
        0 & -\kappa_2^2 \\
        1 & 0
    \end{bmatrix} + i \eta_2 \begin{bmatrix}
        1 & 0 \\
        0 & 1
    \end{bmatrix} + k^2 \begin{bmatrix}
        0 & 1 \\
        0 & 0
    \end{bmatrix} \right) x_2 &= 0,
\end{aligned}
\end{equation}
with the matrices in the first equation of size $25 \times 25$. \cite{bor2025rji} contains the necessary data for all the matrices involved. The objective is to calculate $k,\kappa_1,\kappa_2$ given values of $\eta_1,\eta_2,\omega$.

The authors of \cite{gravenkamp2025leakywaves} construct an ad hoc linearization \cite[eq. 36]{gravenkamp2025leakywaves} based on the particular structure of this problem, with code given in \cite{bor2025rji}. Our method can handle this problem directly. For direct comparison, we produce a version of \cite[fig. 8]{he2024rand} in which we compare the wavenumbers computed using our method to those computed using a linearization and multipareig. We compare the results in \cref{fig:LeakyWaves}, which graphs the computed wavenumbers against the input frequencies for $150$ different sampled frequencies. Our method finds almost the exact solutions even though the linearization from \cite{gravenkamp2025leakywaves} that is used in \cite{he2024rand} is highly specialized to the problem.

\section*{Acknowledgements}

E.G. was supported by NSF GRFP (DGE-2139899) and A.T. was supported by NSF CAREER (DMS-2045646).

\appendix

\section{Connections Between Methods for Multivariate Rootfinding and Eigenvalue Problems}
\label{app:conn}

In \cref{tab:evp}, we reference connections between methods for several multiparameter computational problems: multivariate linear systems solved by Cramer's rule, multivariate polynomial systems solved by the hidden variable Dixon resultant, the multiparameter eigenvalue problem solved by operator determinants, and the polynomial multiparameter eigenvalue problem solved by the hidden variable tensor Dixon resultant. We make these connections more explicit in \cref{fig:evpmethods}.

\begin{figure}
\begin{center}
\begin{tikzpicture}
\draw
	(0,0) node [fill = green!70!black!30,draw,double,rounded corners] {Cramer's Rule}
	(6,0) node [fill = red!70!green!70!black!30,draw,double,rounded corners] {Operator Determinants}
	(0,-3) node [fill = blue!70!green!70!black!30,draw,double,rounded corners] {Dixon Resultant}
    	(6,-3) node [fill = red!70!green!70!blue!70!black!30,draw,double,rounded corners] {Tensor Dixon Resultant};
	\draw
	[line width=1mm,-{Stealth[length=10mm, open]}]  (3.5,0) -> (1.7,0);
	\draw
	[line width=1mm,-{Stealth[length=10mm, open]}]  (3.5,-3) -> (1.7,-3);
	\draw
	[line width=1mm,-{Stealth[length=10mm, open]}]  (0,-2.5) -> (0,-.5);
	\draw
	[line width=1mm,-{Stealth[length=10mm, open]}]  (6,-2.5) -> (6,-.5);
	\draw
	[line width=1mm,-{Stealth[length=10mm, open]}]  (6,1) -> (0,1);
	\draw
	(3,1.7) node [fill = black!30,rounded corners] {Matrix to Scalar Coefficients};
	\draw
	[line width=1mm,-{Stealth[length=10mm, open]}]  (-2,-3) -> (-2,0);
	\draw
	(-3,-1.5) node [fill = black!30,rounded corners,rotate=90,text width=4cm] {High-Degree Polynomial to Linear};
\end{tikzpicture}
\caption{The connections between methods for four different computational algebra and geometry problems. }
\label{fig:evpmethods}
\end{center}
\end{figure}
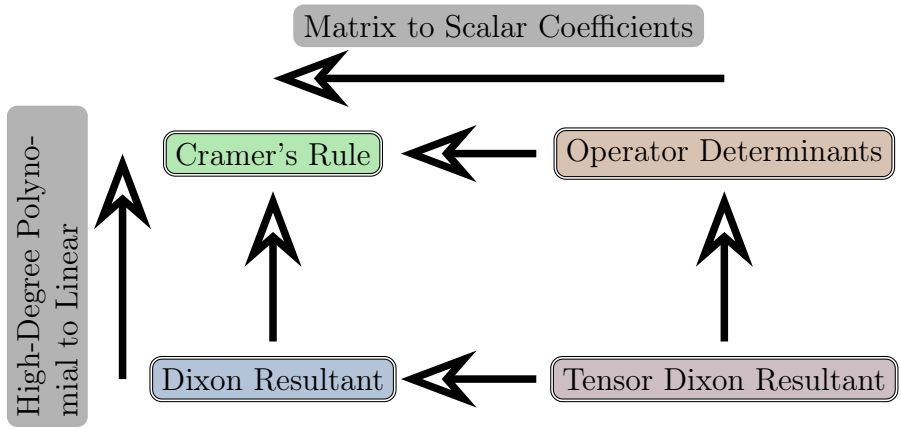

The arrows up and to the left indicate the specialization of our tensor Dixon resultant. Most of these connections are known or readily apparent from the definitions. The fact that operator determinants specialize in the scalar case to Cramer's rule is evident from the definitions and is well known \cite{atkinson1972multieig}. It is also clear from our definitions that the tensor Dixon resultant specializes to the Dixon resultant for scalar polynomials, as it is given by the same formula, with the block/Kronecker determinant that we defined for PMEPs specializing to the standard determinant (in particular the Kronecker product of scalars in $\C$ is the same as the standard product in $\C$). The fact that the Dixon resultant in the linear case is equivalent to Cramer's rule is proved in \cite[Theorem 3.4]{noferini2016instability}. We prove the analogous connection between the tensor Dixon resultant and operator determinants. The proof follows from the proof of \cite[Theorem 3.4]{noferini2016instability}.

\begin{prop}
	Let $W$ be a linear multiparameter eigenvalue problem (MEP) of the form:
	\begin{equation} \label{eq:MEP2}
	W_i(\mathbf{x}) \mathbf{v}_i = \left(V_{i0} - \sum_{j=1}^d x_j V_{ij} \right) \mathbf{v}_i = 0, \quad 1 \leq i \leq d,
	\end{equation}
	with $V_{ij} \in \mathbb{C}^{n_i \times n_i}$ for integers $n_1,\ldots,n_d$.  The GEP that results from applying the hidden variable tensor Dixon resultant method to $W$, hiding $x_d$, is the same as the GEP for $x_d$ that results from applying the operator determinants method to $W$.
\end{prop}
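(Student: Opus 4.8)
The plan is to show that, once $x_d$ is hidden, the Dixon function of a linear MEP degenerates to a matrix that is \emph{constant} in $s_1,\ldots,s_{d-1},t_1,\ldots,t_{d-1}$, and that this constant is, up to a global sign, precisely the operator‑determinant pencil $\Delta_d - x_d\Delta_0$. First I would hide $x_d$ and write each equation as an affine matrix polynomial in the remaining variables, $W_i(x_1,\ldots,x_{d-1};x_d) = \widetilde V_{i0} - \sum_{j=1}^{d-1} x_j V_{ij}$ with $\widetilde V_{i0} := V_{i0} - x_d V_{id}$. In the block determinant defining the numerator of $f_{\text{Dixon}}$, column $k$ (for $1 \le k \le d$) carries the arguments $(t_1,\ldots,t_{k-1},s_k,\ldots,s_{d-1})$, so columns $k$ and $k+1$ differ only in the $k$th slot; since $W_i$ is affine, subtracting column $k+1$ from column $k$ replaces it, in every row $i$, by $-(s_k-t_k)V_{ik}$. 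Doing this for $k=1,\ldots,d-1$ leaves the block determinant unchanged and turns columns $1,\ldots,d-1$ into $-(s_k-t_k)V_{\cdot k}$, while column $d$ stays $W_\cdot(t_1,\ldots,t_{d-1};x_d)$.

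Next I would pull out the scalar factors. Because $\otimes$ is bilinear and each Kronecker--Leibniz term uses each of columns $1,\ldots,d-1$ exactly once, the common factor $(-1)^{d-1}\prod_{k=1}^{d-1}(s_k-t_k)$ factors out of the whole block determinant; this simultaneously proves the division by $\prod_{k=1}^{d-1}(s_k-t_k)$ is exact and gives
\[
f_{\text{Dixon}} \;=\; (-1)^{d-1}\,
\begin{vmatrix}
V_{11} & \cdots & V_{1,d-1} & W_1(t_1,\ldots,t_{d-1};x_d)\\
\vdots & \ddots & \vdots & \vdots\\
V_{d1} & \cdots & V_{d,d-1} & W_d(t_1,\ldots,t_{d-1};x_d)
\end{vmatrix}_{\otimes},
\]
which no longer depends on $s_1,\ldots,s_{d-1}$. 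Expanding the last column by multilinearity of the block determinant in a single column, and using that a block determinant with two equal columns vanishes (pair each permutation with its composite with the transposition swapping the repeated columns: the two Kronecker products coincide and the signs cancel), kills every $t_j V_{\cdot j}$ term, leaving the last column equal to $\widetilde V_{\cdot 0}$. Splitting $\widetilde V_{\cdot 0} = V_{\cdot 0} - x_d V_{\cdot d}$ in that column then yields $f_{\text{Dixon}} = (-1)^{d-1}(\Delta_d - x_d\Delta_0)$ directly from the definition of the operator determinants in \cref{subsec:opdet}.

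Finally I would translate this into the statement about GEPs. A Dixon function constant in $s$ and $t$ unfolds to a matrix polynomial $R(x_d)$ all of whose blocks vanish except the one indexed by the constant $s$‑ and $t$‑monomials, which equals $(-1)^{d-1}(\Delta_d - x_d\Delta_0)$ and whose eigenvectors are the corresponding $\mathbf{v}_1\otimes\cdots\otimes\mathbf{v}_d$; for $d=2$ this is $R(x_d)$ itself, and for $d\ge 3$ it is $R(x_d)$ up to identically zero rows and columns, i.e.\ the trivial eigenvalues removed when the method handles a singular pencil. Either way the resulting GEP coincides, up to the harmless global sign $(-1)^{d-1}$, with the operator‑determinant GEP $(\Delta_d - x_d\Delta_0)\mathbf{z}=0$ for $x_d$. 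I expect the only real difficulty to be bookkeeping in the noncommutative setting: one must verify that the column operations, the extraction of scalar prefactors, and the vanishing of determinants with repeated columns all remain valid for the Kronecker--Leibniz expansion $\Delta_0 = \sum_{\sigma\in S_d}\textbf{sgn}(\sigma)\,V_{1,\sigma(1)}\otimes\cdots\otimes V_{d,\sigma(d)}$, in which the order of the $\otimes$‑factors is pinned to the row index. Since every step used acts within a fixed factor position and never reorders the factors, none is affected by noncommutativity, and the argument proceeds exactly as in the scalar case \cite[Theorem 3.4]{noferini2016instability}.
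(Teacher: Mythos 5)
Your proof is correct, but it takes a genuinely different route from the paper's. The paper argues by transfer: it invokes the scalar result of Noferini--Townsend \cite[Theorem 3.4]{noferini2016instability} that the hidden-variable Dixon function of a linear system $A\mathbf{x}=\mathbf{b}$ is $\det(B)+\det(A)\,x_d$, and then observes that the substitution $A_{ij}\mapsto V_{ij}$, $b_i\mapsto V_{i0}$, scalar products $\mapsto$ Kronecker products is a homomorphism that commutes with the construction of the Dixon function, so the scalar identity maps onto $f_{\text{Dixon}}=\Delta_d-x_d\Delta_0$. Your argument is instead a direct computation inside the Kronecker--Leibniz determinant: successive column differences produce the columns $-(s_k-t_k)V_{\cdot k}$, you extract the scalar prefactor exactly because each Leibniz term uses each column once, and you eliminate the $t_jV_{\cdot j}$ contributions because a Kronecker determinant with equal columns vanishes (the permutation-pairing argument survives the noncommutative setting precisely because swapping $\sigma$ for $(j_1\,j_2)\circ\sigma$ changes a single pinned factor into an identical one). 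Each approach buys something: the paper's version is shorter and re-uses an existing theorem in one line, while yours is self-contained, makes the intermediate algebraic facts explicit (column operations and repeated-column vanishing for $\lvert\cdot\rvert_{\otimes}$), and reports the exact sign $(-1)^{d-1}$, which the paper does not track. You are also more honest than the paper about the $d\ge 3$ case: since $R(x_d)$ has block size $(d-1)!\prod n_i$ while the Dixon function is constant in $s,t$, the unfolded resultant has identically zero rows and columns, so ``the same GEP'' should strictly be read as ``the same after discarding those trivial rows/columns''; your note at the end surfaces this, whereas the paper elides it.
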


\begin{proof}
In the proof of \cite[Theorem 3.4]{noferini2016instability}, the authors consider a linear system $A \mathbf{x} = \mathbf{b}$. They let $A_d$ be the last column of $A$ and $B = A - A_d \mathbf{e_d}^{\top} + \mathbf{b} \mathbf{e_d}$, where $e_d$ is the $d$-th canonical basis vector. Then they prove that $f_{\text{Dixon}} = \text{det}(B) + \text{det}(A) x_d$.\footnote{The authors of \cite{noferini2016instability} refer to the Dixon function as the Cayley function.}

We can establish a correspondence between their argument and the tensor Dixon formulation as follows. The map that sends $V_{ij} \to A_{ij}$ for $j > 0$, $V_{i0} \to \mathbf{b}_i$, and sums of Kronecker products of matrices $V_{ij}$ to sums of products of elements of $A,\mathbf{b}$ is a bijection between the matrix coefficients $V_{ij}$ of the MEP in \cref{eq:MEP2} (and the corresponding combinations) and entries of $A,\mathbf{b}$ in the linear system $A \mathbf{x} = \mathbf{b}$ (and the corresponding combinations). By construction, this bijection commutes with taking Kronecker products on the left-hand side and products on the right-hand side and with sums on either side, so it commutes with the operation of constructing the Dixon function.

Therefore $f_{\text{Dixon}} = \text{det}(B) + \text{det}(A) x_d$ for the linear system implies that for the corresponding MEP
\begin{equation}
f_{\text{Dixon}} = \Delta_d -x_d \Delta_0,
\end{equation}
as in \cref{eq:opdetGEP}, which is the same as the GEP for $x_d$ that results from applying the operator determinants method to $W$.

%In the proof of \cite[Theorem 3.4]{noferini2016instability}, it is observed that it is possible to prove by induction that letting
%$$
%V = \begin{bmatrix}
%s_1 & t_1 & t_1 & \cdots & t_1 \\
%s_2 & s_2 & t_2 & \cdots & t_2 \\
%\vdots & \vdots & \vdots & \ddots & \vdots \\
%s_{d-1} & s_{d-1} & s_{d-1} & \cdots & t_{d-1} \\
%1 & 1 & 1 & \cdots & 1
%\end{bmatrix},
%$$
%we have $\text{det}(V) = \prod_{i=1}^{d-1} (s_i - t_i)$. 
%
%It is clear from the definition of $f_{\text{Dixon}}$ that if all the matrix polynomials are linear, then $f_{\text{Dixon}}$ is constant in the $s$ and $t$ variables. In the expansion of the numerator of $f_{\text{Dixon}}$ all the terms have total degree $\leq d$, so if any term has degree $> 1$ in $x_d$ then it has total degree $< d-1$ in $s,t$, which implies the numerator is not divisible by $\prod_{i=1}^{d-1} s_i - t_i$. Thus $f_{\text{Dixon}}$ is linear in $x_d$. 
\end{proof}

This analysis clearly applies to any of the variables, which makes it clear that repeating the hidden variable method for each variable would construct all of the GEPs in \cref{eq:opdetGEP}. This completes the connections in \cref{fig:evpmethods}.

%% If you have bib database file and want bibtex to generate the
%% bibitems, please use
%%
\bibliographystyle{elsarticle-num} 
\bibliography{references.bib}

\end{document}